\newtheorem{theorem}{Theorem}[section]
\newtheorem{lemma}[theorem]{Lemma}
\newtheorem{proposition}[theorem]{Proposition}
\newtheorem{corollary}[theorem]{Corollary}
\newcounter{intro}
\newtheorem{introthm}[intro]{Theorem}
\theoremstyle{definition}
\newtheorem{definition}[theorem]{Definition}
\newtheorem{example}[theorem]{Example}
\newtheorem{remark}[theorem]{Remark}
\newtheorem{chunk}[theorem]{}
\newif\ifshowcomments
\newcommand{\michael}[1]{\ifshowcomments\textcolor{purple}{(#1)}\fi}
\newcommand{\cb}{\color{blue}}
\newcommand{\Hom}{{\operatorname{Hom}}}
\newcommand{\Tor}{{\operatorname{Tor}}}
\newcommand{\del}{\partial}
\renewcommand{\H}{\operatorname{H}}
\newcommand{\xra}{\xrightarrow}
\newcommand{\vp}{\varphi}
\newcommand{\ann}{{\operatorname{ann}}}
\newcommand{\soc}{{\operatorname{soc}}}
\newcommand{\shift}{{\scriptstyle\mathsf{\Sigma}}}
\renewcommand{\-}{\text{-}}
\newcommand{\m}{\mathfrak{m}}
\newcommand{\n}{\mathfrak{n}}
\newcommand{\N}{\mathbb{N}}
\newcommand{\Z}{\mathbb{Z}}
\DeclareMathOperator{\im}{im}
\newcommand{\isom}{\cong}
\DeclareMathOperator{\depth}{depth}
\DeclareMathOperator{\BI}{BI}
\DeclareMathOperator{\Burch}{Burch}
\newcommand{\krank}{k\-\operatorname{rank}}
\newcommand{\rank}{\operatorname{rank}}
\DeclareMathOperator{\syz}{syz}
\DeclareMathOperator{\coker}{coker}
\DeclareMathOperator{\edim}{edim}
\author[M.~DeBellevue]{Michael DeBellevue}
\address{Mathematics Department, Syracuse University, Syracuse, NY 13244 U.S.A.}
\email{mpdebell@syr.edu}
\author[C.~Miller]{Claudia Miller}
\address{Mathematics Department, Syracuse University, Syracuse, NY 13244 U.S.A.}
\email{clamille@syr.edu}
\date{\today}
\title[$k$ Summands Via Canonical Resolutions]{$k$ Summands of Syzygies over Rings of Positive Burch Index Via Canonical Resolutions}
\keywords{Burch}
\subjclass[2020]{13D02, 13A35 (primary); 13D07, 16E45 (secondary)}
\begin{document}

\maketitle

\begin{abstract}
In recent work, Dao and Eisenbud define the notion of a Burch 
index, expanding the notion of Burch rings of Dao, Kobayashi, and Takahashi, and show that for any module over a ring of Burch index at least 2, its 
$n$th syzygy contains direct summands of the residue field for $n=4$ or $5$ and all $n\geq 7$. We investigate how this behavior is explained by the bar 
resolution formed from appropriate differential graded (dg) resolutions, yielding a new proof that includes all $n\geq 5$, which is sharp. When the module is Golod, we use instead the bar resolution formed from $A_\infty$ resolutions to identify such $k$ summands explicitly for all $n\geq 4$ and show that the number of these grows exponentially as the homological degree increases.
\end{abstract}

\section*{Introduction}
\label{sec-intro}

The results in this paper focus on finding copies of the residue field of a local or graded-local ring appearing as direct summands of syzygies of any finitely generated module over certain rings. This continues the work of Dao and Eisenbud in \cite{DaoEis-23}, in which they find that this phenomenon occurs surprisingly frequently. 
In the first part of the paper, we give a different proof of their result which explains the homological persistance of summands of syzygies as a manifestation of the self-similarity of the bar complex.
The key is identifying certain low degree non-Koszul cycles in all resolutions of the ring over an ambient regular ring.
Applied to resolutions equipped with a differential graded algebra and differential graded module structure, we obtain cycles in the bar resolution of the module over the ring of interest.
This shows that simple summands of syzygies are caused by the beginning of the resolution of the ring, which explains the ubiquity of this phenomenon.

These cycles spring from positivity of a numerical invariant of a local ring $(R,\m,k)$ called the Burch index, denoted by $\Burch(R)$.
This was introduced by Dao and Eisenbud to refine the definition of a Burch ring, first defined by Dao, Kobayashi, and Takahashi in \cite{DaoKobTak-20} as a ring with positive Burch index.
When $R$ is a Burch ring, resolutions of $\hat{R}$ over a regular ring $Q$ have strong linearity properties \cite[2.4]{DaoEis-23} which dominate the resolutions of all finitely generated $R$-modules \cite[3.1,~3.7]{DaoEis-23}.
Rings with larger Burch index are plentiful; in particular, \cite[Example~4.7,~Proposition~1.5]{DaoEis-23} provide large infinite classes of such rings.

Larger Burch index provides stronger results: When $R$ is a depth zero ring with Burch index at least $2$, Dao and Eisenbud prove that $\syz_i^R(M)$ has a $k$ summand for some $i\in\{4,5\}$ and all $i\geq 7$ and give an example to show that it need not occur for $i=4$~\cite[3.1, 3.6]{DaoEis-23}.
We give a new proof which, as a practical consequence, shows that $k$ summands are found in every degree five and beyond, giving a more uniform variant of \cite[Thm~3.1]{DaoEis-23} as follows.
This result is a corollary of the result established in \cref{thm:general} via \cref{dfn-burch-absolute}. 

\begin{introthm}\label{introthm:general}
    Let $(R,\m,k)$ be a local (or graded-local) ring of depth $0$ with  $\Burch(R)\geq 2$, and let $M$ be a finitely generated (resp., finitely generated graded) $R$-module. 
    Then $\syz_i^R(M)$ has a $k$ summand for each $i\geq 5$.
\end{introthm}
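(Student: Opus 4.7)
My approach to proving \cref{introthm:general} is to apply the machinery suggested in the introduction: bar resolutions formed from dg algebra resolutions. The plan is to show that the hypothesis $\Burch(R) \geq 2$ forces certain low-degree non-Koszul cycles to exist in the minimal dg algebra resolution of $R$ over a regular ring, and then exploit the self-similarity of the bar complex to propagate these cycles (and the associated $k$ summands) into every sufficiently high syzygy of $M$.

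First I would reduce to the complete case by replacing $R$ with $\hat{R}$ and $M$ with $M \otimes_R \hat{R}$, since a $k$ summand of $\syz_i^R(M)$ is detected faithfully after flat base change. Thus I may assume $R = Q/I$ with $(Q,\n)$ a regular local ring. Next, fix a minimal dg algebra resolution $A \twoheadrightarrow R$ over $Q$ and a minimal dg $A$-module resolution $F \twoheadrightarrow M$ over $Q$. The translation recorded in \cref{dfn-burch-absolute} converts the inequality $\Burch(R) \geq 2$ into the existence of two independent cycles in $A$ concentrated in low homological degree whose residues in $A \otimes_Q k$ lie outside the subalgebra generated by the Koszul (degree one) generators.

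With these cycles in hand, form the bar resolution $B(R, A, F) \twoheadrightarrow M$ as an $R$-free resolution of $M$, with terms of the shape $R \otimes_Q \bar{A}^{\otimes n} \otimes_Q F$ where $\bar{A}$ is the augmentation ideal of $A \otimes_Q k$. The self-similar structure in the bar direction means that a fixed non-Koszul cycle $\zeta \in \bar{A}$ can be inserted into any bar tensor slot, producing a combinatorial family of cycles whose total homological degree sweeps out every value at or above a small constant determined by $\zeta$. Combining the two Burch cycles from the previous step, inserted either singly or jointly into the bar slots, should yield explicit bar cycles at every total degree $i \geq 5$ and hence $k$-linear maps $k \to \syz_i^R(M)$.

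The main obstacle will be upgrading these injections to split injections, i.e., producing retractions $\syz_i^R(M) \twoheadrightarrow k$. Here I expect the dg algebra structure on $A$ to be essential: one reads off the coefficient of a chosen Burch element using the multiplication in $A$, and the minimality of $F$ ensures this coefficient is nonzero modulo $\m$. This is precisely the content that \cref{thm:general} must supply, namely a translation of $\Burch(R) \geq 2$ into a structural statement about $A$ strong enough to produce the splittings. The sharpness of the bound $i \geq 5$ should reflect the fact that the smallest non-Koszul bar cycle one can assemble from two independent Burch elements in this construction lives in total homological degree exactly $5$.
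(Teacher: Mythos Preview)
Your broad strategy---reduce to the completion, form a bar resolution from dg structures, and feed in low-degree ``Burch'' elements---matches the paper's, but several of the concrete steps in your plan would fail as written.

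First, you cannot assume a \emph{minimal} dg algebra resolution $A$ of $R$ over $Q$ or a minimal dg $A$-module resolution $F$ of $M$: minimal free resolutions need not carry dg structures (Avramov's obstruction). The paper deliberately works with non-minimal dg resolutions $X$ and $Y$ (see \cref{chu-general-setup}), produces $k$-summands in the \emph{non-minimal} bar complex $B(R,X,Y)$, and then uses a separate lemma (\cref{lem:nonminimal-to-minimal}) to push these down to the minimal resolution. You are missing both halves of this. Second, to land cycles in $B(R,X,Y)$ at all, the paper builds $Y$ so that there is a degreewise-split dg $X$-module map $\psi\colon X\to Y$, and the explicit cycles are quite delicate: for even $q$ one needs the correction term $[ef_{x_j,x_i}|e|\cdots]\psi(1)$ so that the bar differential cancels via $e^2=0$ and $e\psi(e)=\psi(e^2)=0$. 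Simply ``inserting a Burch cycle into any bar slot'' does not yield a cycle. Third, your splitting mechanism (``read off a coefficient via multiplication in $A$'') is not what works here; the paper's splitting is \cref{lem-splitting}: one shows $\partial(\rho)\in\n B_{q-1}\setminus\BI(\vp)B_{q-1}$, whence multiplication by a suitable socle lift $s\in(I:\n)$ yields an $\bar s\rho$ that generates a split $k$-summand of $\ker\partial$. The crucial input from $\Burch(\vp)\ge 2$ is precisely that the degree-two basis elements $f_{x_j,x_i}\in X_2$ (constructed in \cref{chu-fij}) satisfy $\partial(f_{x_j,x_i})\notin\BI(\vp)X_1$, not the vaguer ``lies outside the Koszul subalgebra'' condition you describe.
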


In the second part of the paper, we refine our technique by using $A_\infty$ structures on minimal resolutions of the ring and module over an ambient ring.
These structures are sufficient to construct the bar resolution, which is minimal when $M$ is Golod \cite{Bur-15}.
Under this additional assumption, we build cycles in the bar resolution which precisely correspond to syzygy summands. 
This establishes an explicit exponential lower bound in the number of such summands in  homological degree at least $4$.
In analogy with free rank, we let $\krank(M)$ be the number of independent $k$ summands of a module $M$.
This result is a corollary of the result established in \cref{thm:golod} via \cref{dfn-burch-absolute}. 

\begin{introthm}\label{introthm:golod}
Let $(R,\m,k)$ be a local (or graded-local) ring with $\Burch(R)=b\geq 2$, and let $M$ be a finitely generated (resp., finitely generated graded) Golod $R$-module.
Suppose that $R$ or its completion $\hat{R}$ are presented minimally as $Q/I$, where $(Q,\n,k)$ is regular local (or graded-local) ring and $I\subseteq \n^2$.
Then for each $i\geq 4$
\[
\krank({\syz_i^RM})\geq {b\choose 2} (\mu(I))^{\lfloor{\frac{i-4}{2}}\rfloor}.
\]
where $\mu(I)$ denotes the minimal number of generators of $I$. 
\end{introthm}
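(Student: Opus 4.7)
The plan is to construct the promised $k$-summands as explicit basis tensor-monomials in the minimal $A_\infty$ bar resolution of $M$ over $R$, then to count them and verify independence. Writing $R=Q/I$ minimally as in the hypothesis, let $F\to R$ be a minimal $Q$-free $A_\infty$ algebra resolution and $G\to M$ a compatible minimal $Q$-free $A_\infty$ $F$-module resolution, and form the bar resolution
\[
B_\bullet \;:=\; (G\otimes_Q R)\otimes_k T_k\bigl(\shift\, \overline{F\otimes_Q R}\bigr).
\]
Since $M$ is Golod, Burke's theorem \cite{Bur-15} guarantees $B_\bullet$ is a minimal $R$-free resolution of $M$. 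Minimality identifies basis tensor-monomials of $B_i$ bijectively with minimal generators of $\syz_i^R M$; a monomial $e\in B_i$ detects a $k$-summand of $\syz_i^R M$ precisely when $\partial(e)\neq 0$ is annihilated by $\m$ inside $B_{i-1}$. The goal is then to exhibit enough such monomials.

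For the construction, I would extract from $\Burch(R)\geq b$ a system of $b$ linearly independent ``Burch elements'' $\tau_1,\dots,\tau_b$ in the appropriate subspace of $\shift\overline{F_1\otimes_Q R}$ (the Burch index measures precisely such elements via the first syzygy structure of $R$ over $Q$). Fixing a minimal generator $g$ of $M$ lifted to $G_0$, the $\binom{b}{2}$ length-two bar words $g\otimes\tau_i\otimes\tau_j$ with $i<j$ should give basis elements of $B_4$ whose differentials are $\m$-annihilated, producing $\binom{b}{2}$ distinct $k$-summands in $\syz_4^R M$. To extend to higher degree, fix the $\mu(I)$ basis elements $\xi_1,\dots,\xi_{\mu(I)}$ of $\shift\overline{F_1\otimes_Q R}$ (lifts of minimal generators of $I$); inserting a single $\xi_k$ at a prescribed position (say, leftmost among bar letters, to avoid double-counting) raises homological degree by $2$ and should preserve the property that the differential lies in the $\m$-annihilated part of $B_{i-1}$. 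Iterating yields the multiplicative factor $\mu(I)^{\lfloor(i-4)/2\rfloor}$, and the constructed monomials, being distinct basis vectors of $B_i$, are automatically $R$-linearly independent with independent images in $\syz_i^R M$.

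The main obstacle is the bar differential calculation. Burke's differential combines all higher $A_\infty$ operations $m_n$ on $F$ and $G$, so one must show that the Burch linearity condition --- which forces products of $\m$ with the relevant elements of $F_1$ to already lie in $\n I$ at the chain level --- makes every $m_n$-contribution to $\partial(e)$ land in the $\m$-annihilated part of $B_{i-1}$, both for the seed elements in $B_4$ and after each $\xi_k$-insertion. The delicate point is controlling the interference between an inserted $\xi_k$ and the Burch elements sitting to its right, since the $m_n$ for $n\geq 3$ can in principle produce cross terms that spoil the socle property; here one uses the full strength of $\Burch(R)\geq 2$ applied jointly to the minimal generators of $I$ and to the Burch elements. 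Once this interference is shown to vanish modulo $\m$, the count and independence assertions follow immediately from the structure of $B_\bullet$, giving the stated lower bound on $\krank(\syz_i^R M)$.
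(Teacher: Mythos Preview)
Your overall architecture---minimal $A_\infty$ bar resolution, Golod minimality, counting tensor monomials---matches the paper, but the specific mechanism has two genuine gaps.

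First, the ``Burch elements'' do not live in $F_1$. What the Burch index $\geq 2$ actually produces is a collection of \emph{degree-two} elements $f_{x_j,x_i}\in F_2$, one for each pair $i<j$, obtained as preimages of certain degree-one cycles $\omega_{x_j,x_i}=x_je_i-x_i(\sum_\ell r_\ell e_\ell)$ (here $x_1,\dots,x_b$ are lifts of a basis of $\n/\BI(\vp)$). The base monomial is then the single-bar word $[f_{x_j,x_i}]y$ with $y\in Y_0$, sitting in $B_3$ and giving a summand of $\syz_4$, not a two-letter word in degree-one elements. Your proposed elements $g\otimes\tau_i\otimes\tau_j$ with $\tau_i\in F_1$ do not satisfy your own criterion: the differential contains the term $[m_2(\tau_i,\tau_j)]g$, and there is no reason for $m_2(\tau_i,\tau_j)\in F_2$ to lie in $\soc(R)\cdot F_2$, so $\partial(e)$ is not $\m$-annihilated.

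Second, and relatedly, the splitting mechanism the paper uses is different and sidesteps the $A_\infty$ interference you worry about. One does \emph{not} arrange that $\partial(\rho)$ is $\m$-annihilated. Instead, for $\rho=[f_{x_j,x_i}|e_{i_1}|\cdots|e_{i_d}]y$ a basis monomial, one multiplies by the associated socle element $s_i\in(I:\n)$: minimality of $B$ gives $\partial(s_i\rho)=s_i\partial(\rho)\in s_i\m B=0$, so $s_i\rho$ is a cycle in $\ker\partial_q=\syz_{q+1}$. That it splits off is then Dao--Eisenbud's lemma, which only asks that $\partial(\rho)\notin\BI(\vp)B_{q-1}$; this holds because the single summand $[\partial f_{x_j,x_i}|e_{i_1}|\cdots]y$ of $\partial(\rho)$ has the coefficient $x_j\notin\BI(\vp)$ on a basis monomial not hit by any other term. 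The higher $m_n$ contribute to $\partial(\rho)$ but are irrelevant to both checks, so no delicate interference analysis is needed.
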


Note that, even when $R$ is not Golod, if $\Burch(R)\geq 2$, it already follows from \cite{DaoEis-23} that a subsequence of $\krank(\syz_R^i(M))$ grows exponentially; see \cref{rmk:exponential}. 
Here, however, noting that $\mu(I)\geq 2$ when $\Burch(R)\geq 2$, we see exponentially-many explicit cycles in all degrees $q\geq 4$ providing these summands. 

In their paper, Dao and Eisenbud give an example to show that Burch index 1 does not suffice for the existence of $k$ summands in syzygies.
In \cref{chu:exampleBIone}, we explore this example to illustrate where in our methods a Burch index of at least 2 is crucial. 

The paper begins with a background section, \cref{sec-background}, with definitions of $\Burch(R)$ and the relative version $\Burch(\varphi)$ for a ring map $\varphi$. 
We also remind the reader of $A_\infty$ and differential graded algebra structures and their use in constructing the bar resolution.
In the preliminaries section, \cref{sec-preliminaries}, we define a certain set of cycles in the resolution of $R$ over a regular ring which we call \emph{Burch cycles}.
We then use these cycles and the bar resolution to produce $k$ summands in syzygies when the Burch index is at least 2, first in a general setting in \cref{sec-general} and then, with more precision, in the relative Golod setting in \cref{sec-golod}. 

One last remark: While the theorems above were stated in terms of a fixed ring relative to an ambient ring given by a minimal Cohen presentation, we prove more general results about ring homomorphisms.
They are related to the statements in the introduction in \cref{dfn-burch-absolute}.

\section{Background}
\label{sec-background}
\begin{chunk}\label{chu-presentation}
Let $(Q,\n)$ and $(R,\m)$ be local commutative Noetherian rings with a fixed surjective homomorphism $\vp\colon Q\rightarrow R$ and common residue field $k$ and kernel $I$. 
An important application is when $\vp$ is the minimal Cohen presentation of a complete or positively graded ring. 
Modules $M$ over $R$ or $Q$ are always assumed to be finitely generated.
\end{chunk}

\begin{chunk}\label{chu-graded}
The results in this work apply equally well to graded-local rings with a graded-local homomorphism $\vp\colon (Q,\n)\rightarrow (R,\m)$, by which we mean that $Q$ and $R$ are $\N$-graded (or $\N^l$ multigraded) rings whose degree $0$ pieces are a common field $k$.
In this setting, we further require that modules be graded also.
\end{chunk}



\begin{definition}\label{dfn-burch}
Here we recall from \cite{DaoEis-23} the definitions of the Burch ideal and the Burch index of $\vp$. 
The \emph{Burch ideal} of the map $\vp$ with kernel $I$ is defined to be 
\[
\BI(\vp)= I\n : (I : \n). 
\]
Note that when $\depth R=0$, $\soc(R)\neq0$, and the preimage of $\soc(R)$ in $Q$ is the ideal $(I : \n)$. 
By abuse of language, we may call elements of this ideal {\it socle elements} in descriptive text.

Define the \emph{Burch index} of $\vp$ to be 
\[
\Burch(\vp)=\dim_k \frac{\n}{\BI(\vp)}
=\dim_k \frac{\n}{(I\n : (I : \n))}
\]
When $\depth(R)\neq 0$ or $I=0$, $(I : \n)=I$, and so $\BI(\vp)=\n$ and $\Burch(\vp)=0$.

For a more qualitative description, note the following: The ideal $I\n$ records those elements of $I$ which are not minimal generators, so the iterated colon ideal $I\n : (I : \n)$ describe lifts of elements of $R$ which multiply the lift of the socle outside of the set of minimal generators of $I$.
Evidently, $\n^2$ is contained in this set: $\n (I : \n)\subseteq I$ by the definition of the colon ideal, and then multiplying on the left by $\n$ gives $\n^2 (I : \n) \subseteq \n I$, whence $\n^2\subseteq \BI(\vp)$.
Therefore, there is a surjection  
\[
\n/\n^2 \twoheadrightarrow \n/\BI(\vp)
\]
The latter is the $k$-space of generators of $\n$ which multiply some minimal generator of the lift of the socle to a minimal generator of $I$. Therefore, for each $x\in \n/\BI(\vp)$, there are corresponding socle elements $s$ whose images in $(I : \n)/\n (I : \n)$ are nonzero and which satisfy $xs\in I/\n I$. 

This yields an \emph{equivalent description} of the Burch index which we will use later: Setting $b=\Burch(\vp)$, there is a minimal generating set $\{a_1,\dots,a_m\}$ of $I$ with the property that there exist $x_1,\dots,x_b \in \n$ whose images are \emph{independent} in the $k$-vector space $\n/\BI(\vp)$ (and hence also in $\n/\n^2$) such that for $i=1,\dots,b$
one has 
\[
a_{j_i}=x_is_i 
{\textrm{ for some }} 
s_i\in(\n:I) 
{\textrm{ and some }} 
j_i.
\]
Note that we may have $a_{j_i}=a_{j_\ell}$ for some $i\neq \ell$.


\end{definition}

\begin{definition}\label{dfn:krank}
Let $M$ be a finitely generated module over a local ring $(R,\m,k)$. 
In analogy with the more traditionally studied free rank, we set the \emph{$k$-rank of $M$} to be the maximal number of direct summands of the residue field:
\[\krank_R(M)=\max\{n\,|\, k^n\xhookrightarrow{\oplus} M\}\]
where we use the notation $N\xhookrightarrow{\oplus}M$ to denote that $N$ is a direct summand of $M$.
We write $\krank(M)=\krank_R(M)$ when $R$ is clear from context.
The use of maximum rather than supremum is justified by the fact that $M$ is finitely generated: any such map remains injective upon tensoring with $R/\m$.
We say that an element $m\in M$ generates a direct $k$-summand if $km\xhookrightarrow{\oplus}M$.
\end{definition}

\begin{proposition}\label{prop:completion} 
If $M$ is finitely-generated, then $\krank(M)$ is invariant under completion.
\end{proposition}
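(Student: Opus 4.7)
The plan is to give a completion-invariant intrinsic description of $\krank_R(M)$ and then observe that each ingredient of that description is preserved under completion.

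First I would prove the characterization
\[
\krank_R(M) \;=\; \dim_k \frac{\soc(M) + \m M}{\m M},
\]
that is, the dimension of the image of $\soc(M)$ in $M/\m M$. For the inequality $(\leq)$, fix a decomposition $M = k^n \oplus N$ with $n = \krank_R(M)$; the generators of the $k^n$ summand lie in $\soc(M)$ and form part of a minimal generating set of $M$, so they span an $n$-dimensional subspace of $M/\m M$, while maximality of $n$ forces $\soc(N) \subseteq \m N$ (otherwise a class of $\soc(N)$ outside $\m N$ would, by the construction used for $(\geq)$, produce another $k$-summand). For the inequality $(\geq)$, start with $m_1,\ldots,m_d \in \soc(M)$ whose images are linearly independent in $M/\m M$, extend to a minimal generating set $m_1,\ldots,m_r$ of $M$, and let $N = (m_{d+1},\ldots,m_r)$. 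Any relation expressing an element of $\sum_{i\le d} km_i$ as lying in $N$ would descend to a $k$-linear dependence in $M/\m M$, contradicting minimality; hence $M = km_1 \oplus \cdots \oplus km_d \oplus N$, giving $\krank_R(M) \geq d$.

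With the characterization in hand, the completion step is formal. Because $M$ is finitely generated and $R$ is Noetherian, $\soc(M) = \Hom_R(k,M)$ is a finite-dimensional $k$-vector space, and $k$ is finitely presented, so flat base change along $R \to \hat{R}$ gives $\soc(M) \otimes_R \hat{R} \cong \soc(\hat{M})$; this tensor product equals $\soc(M)$ itself since $\soc(M)$ is a $k$-vector space. The same reasoning gives $M/\m M \cong \hat{M}/\hat{\m}\hat{M}$, and the natural map from socle to minimal-generator quotient commutes with completion. Plugging into the formula above yields $\krank_R(M) = \krank_{\hat{R}}(\hat{M})$.

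The main obstacle is the characterization of $\krank$ in the first step: one must check both that any socle element outside $\m M$ peels off as a $k$-summand and that, conversely, maximality of the chosen decomposition forces the residual socle into $\m N$. The completion half is a standard consequence of flat base change applied to the finitely presented module $k$.
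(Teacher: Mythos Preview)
Your proof is correct, but it takes a different route from the paper's.

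The paper argues more directly: it uses the same flat-base-change identification $\soc_R(M)\cong\soc_{\hat R}(\hat M)$ (compatible with $M\hookrightarrow\hat M$), and then, given a splitting $\hat M\to k^d$ of a summand $k^d\subseteq\soc_{\hat R}(\hat M)$, simply precomposes with $M\to\hat M$ to obtain a splitting of the same $k^d\subseteq\soc_R(M)\subseteq M$. This gives $\krank_R(M)\geq\krank_{\hat R}(\hat M)$; the reverse inequality is immediate by completing a decomposition $M=k^n\oplus N$. Your approach instead first proves the intrinsic formula $\krank_R(M)=\dim_k\bigl((\soc M+\m M)/\m M\bigr)$ and then checks that both socle and $M/\m M$ are unchanged under completion. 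The paper's argument is shorter and avoids the auxiliary characterization, while yours has the advantage of isolating a clean description of $\krank$ that is useful in its own right and makes the invariance essentially formal.
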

\begin{proof}
If $\krank_{\hat R}(\hat M) = d$, then \[\soc_R(M)=\widehat{\soc_R(M)}=\widehat{\Hom_R(k,M)}\isom \Hom_{\hat R} (\hat k,\hat M)=\soc_{\hat R}(\hat M),\]
and this isomorphism respects the natural inclusion $M \to \hat M$. 
The splitting $\hat M \rightarrow \soc_{\hat R}(\hat M) \cong k^d$ then factors the identity of $k^d$ as $k^d\cong \soc_R(M) \rightarrow M\rightarrow \hat M\rightarrow k^d$.
\end{proof}

\begin{chunk}\label{dfn-burch-absolute}
In \cite{DaoEis-23} the \emph{absolute Burch index}, $\Burch(R)$, of the ring $R$ is also defined by taking $\vp$ to be a minimal Cohen presentation, but we will not make use of this notion in this work.  
However, to translate our results to the special cases presented as Theorems A and B in the introduction, we describe it briefly: 
By the Cohen Structure Theorem, one may write the completion of $R$ as the quotient $\hat R=Q/I$ of a regular local (or graded-local) ring $Q$ by an ideal $I$ in the square of the (homogeneous) maximal ideal. 
Let $\vp\colon Q \to \hat R$ be the natural surjection. 
Then one has 
\[
\Burch(R) = \Burch(\vp) = \dim_k \frac{\n}{(I\n : (I : \n))}
\]
\cref{thm:general,thm:golod} discussed below then extend to the absolute case by \cref{prop:completion}.
\end{chunk}
\begin{chunk}\label{chu-dga}
Bar resolutions are an essential tool used in this work. 
Their construction involves building projective resolutions over $R$ from those over $Q$.
The resolutions over $Q$ must be equipped with $A_\infty$ structures.
The simplest and most classical of these is that of a \emph{differential graded (dg) algebra}, first introduced into commutative algebra by Tate~\cite{Tate:1957}.
We recall their definition below:

\begin{definition} A \emph{dg algebra} $X$ over $Q$ is complex of $Q$-modules further equipped with a chain map $m_2\colon X\otimes_Q X\rightarrow X$  (the $Q$-bilinear product); writing the product as concatenation in the ordinary way, that this product is a chain map is encoded by the Leibniz rule:
\[\del(ab)=\del(a)b+(-1)^{|a|}a\del(b){\textrm{ for all (homogeneous) }}a,b\in X\]
where $|a|$ denotes the degree of $a\in X$; note that by our convention, $X$ is a complex and so elements are automatically homogeneous.
Denoting the differential by $m_1$ and the product map by $m_2$, this can be restated as
\looseness -1
\[m_1(m_2(a,b))=m_2(m_1(a),b)+(-1)^{|a|}m_2(a,m_1(b)){\textrm{ for all }}a,b\in X.\]
We require that $\oplus_n X_n$ is a $Q$-algebra in the ordinary sense once the differential has been forgotten: in particular, $m_2$ must be associative and $X$ is equipped with a unit map $u\colon Q\rightarrow X$.

A \emph{dg module} $Y$ over a dg algebra $X$ is a complex $Y$ of $Q$-modules further equipped with a chain map $X\otimes_Q Y\rightarrow Y$ which is compatible with differentials of $X$ and $Y$, as encoded by an analogous Leibniz rule.

A \emph{dg algebra resolution} of $R$ over $Q$ is a dg algebra $X$ with $H(X)\isom R$, each $X_n$ a projective $Q$-module, and $X_{<0}=0$.
For an $R$-module $M$, a \emph{dg module resolution} of $M$ over $X$ is a dg module $Y$ with $H(Y)\isom M$, each $Y_n$ projective over $Q$, and $Y_{<0}=0$.
\end{definition}

\begin{chunk}\label{c:dga-existence}
While the minimal $Q$-free resolutions of $R$ and $M$ need not support dg structures, there always exists \emph{some} resolution $X$ of $R$ with a dg algebra structure \cite{Tate:1957} and \emph{some} resolution $Y$ of $M$ with a dg $X$-module.
When $\vp\colon Q\rightarrow R$ is surjective and $M$ is a finite $R$-module, then such resolutions exist with
\begin{enumerate}
    \item $X_0=Q$, and each of $X_n$ and $Y_n$ is finite over $Q$ for all $n$,
    \item forgetting differentials, the underlying algebra structure of $X$ is that of a free graded-commutative $Q$-algebra \cite[2.1.10]{Avramov:1999}, and
    \item forgetting differentials, the underlying module structure of $Y$ over $X$ is free \cite[2.2.7]{Avramov:1999}.
\end{enumerate}
\end{chunk}

\end{chunk}

\begin{definition}\label{def-Ainfinity}
    Let $X$ be any resolution of $R$ over $Q$, and $Y$ be any $Q$-free resolution of an $R$-module $M$.
    Any lifts of the multiplication maps $Q\otimes_Q Q\rightarrow Q$ and $Q\otimes_Q M\rightarrow M$ provide candidate multiplication maps $m_2\colon  X\otimes_Q  X\rightarrow X$ and $\mu_2\colon  X\otimes_Q Y\rightarrow Y$ for dg algebra and module structures of $X$ and $Y$, respectively.
    Such lifts may fail to be associative.
    For some rings and modules, when $X$ and $Y$ are minimal resolutions over $Q$, no such lifts are associative~\cite{Avramov:1981}.
    To rectify this, we make use of an $A_\infty$ structure, the needed details of which we recall below; see~\cite{LodVal-12, Kel-06,Lef-02} for more of the general theory and \cite{sta-61,sta-19} for their history and origins.
   
An \emph{$A_\infty$-algebra} over a ring $Q$ is a complex $A$ of $Q$-modules together with $Q$-multilinear maps of degree $n-2$ 
\[
m_n \colon A^{\otimes n} \to A, {\textrm{ \ for }}n\geq 1
\] 
called operations or multiplications, satisfying the Stasheff identities for each $n\geq 1$:
\[
\sum_{s=1}^n
\sum_{\substack{r+s+t=n\\ r,t\ge 0}}  (-1)^{r+st} 
m_{r+1+t} (1^{\otimes r} \otimes m_s \otimes 1^{\otimes t}) = 0
\]
where the elements are automatically homogeneous as the underlying structure of $A$ is that of a complex.
The \emph{associator} of the multiplication $m_2$ sends $a\otimes b\otimes c$ to $a(bc)-(ab)c$ (where we write $ab=m_2(a\otimes b)$).
The third Stasheff identity is \begin{align*}
m_2(&1 \otimes m_2 - m_2 \otimes 1)  \\ &=m_1m_3 +m_3(m_1 \otimes 1 \otimes 1+1 \otimes m_1 \otimes 1+1 \otimes 1 \otimes m_1)
\end{align*}
which shows that the associator is the boundary of $m_3$ in 
$\Hom_R(A^{\otimes 3},A)$.
In other words, the associator is \emph{nullhomotopic} with nullhomotopy $m_3$.

Note that when one applies the maps in each formula above 
to an element, one should use the Koszul sign rule: 
For graded maps $f$ and $g$, one has 
\[
(f \otimes g)(x \otimes y) = (-1)^{|g||x|}f(x) \otimes g(y)
\]
where $|g|$ denotes the degree of $g$.

We also require $A_\infty$ algebras to be \emph{strictly unital}, in that there is a unit map $u\colon Q\rightarrow A$ such that for $r+t=n\neq 2$, $m_n(1^{\otimes r}\otimes u\otimes 1^{\otimes t})=0$.
Intuitively, this just states that multiplication by the unit is associative~\cite[7.2]{Positselski:11}.

An \emph{$A_\infty$-module} $M$ over an $A_\infty$-algebra $A$ is a complex $M$ of $Q$-modules together with $Q$-multilinear maps of degree $n-2$ 
\[
\mu_n \colon A^{\otimes {n-1}}\otimes M \to M, {\textrm{ \ for }}n\geq 1
\] 
such that analogous Stasheff identities hold. 
\end{definition}

In fact, $A_\infty$ are more affordable than dg algebra structures, as manifested by the following:

\begin{proposition}[\cite{Bur-15}, Proposition 3.6]\label{prop:burkeAinfty}
    Let $X$ be any resolution of $R$ over $Q$ and $Y$ be any $Q$-resolution of an $R$-module $M$.
    Then there exists an $A_\infty$ algebra structure on $X$ and an $A_\infty$ $X$-module structure on $Y$.
\end{proposition}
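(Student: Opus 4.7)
The plan is to build the operations $m_n$ on $X$ (and $\mu_n$ on $Y$) by induction on $n$ using obstruction theory, exploiting projectivity and the fact that the multiplication on $R$ (and the $R$-action on $M$) is strictly associative to guarantee that the relevant obstructions vanish.

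For the algebra case, set $m_1$ equal to the differential of $X$. For $m_2$, lift the ordinary multiplication $R \otimes_Q R \to R$ through the surjections $X \to R$ and $X \otimes_Q X \to R \otimes_Q R$; such a lift is a chain map because $X$ is $Q$-projective in each degree and $X \otimes_Q X$ is a $Q$-projective resolution of $R \otimes_Q R$. The strict-unit condition is arranged by choosing the lift compatibly with a chosen unit $u\colon Q \to X$. Inductively, suppose $m_1, \ldots, m_{n-1}$ have been constructed and satisfy the Stasheff identities in all degrees $< n$. The $n$th Stasheff identity can be rewritten as
\[
\partial(m_n) \;=\; \Phi_n
\]
in the $\Hom$-complex $\Hom_Q(X^{\otimes n}, X)$, where $\Phi_n$ is an explicit expression in $m_1, \ldots, m_{n-1}$ (with the Koszul signs) and $\partial$ is the natural differential on $\Hom_Q(X^{\otimes n}, X)$.

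The two critical verifications are then: (i) $\Phi_n$ is a cycle, i.e.\ $\partial(\Phi_n) = 0$, and (ii) the homology class $[\Phi_n]$ vanishes, so that a null-homotopy $m_n$ exists. For (i), one applies $\partial$ to $\Phi_n$ and collects terms; every summand cancels by invoking a lower Stasheff identity, a purely formal calculation that is classical and independent of the commutative-algebra setting. For (ii), note that $X^{\otimes n}$ is a $Q$-projective resolution of $R^{\otimes n}$ since each factor is $Q$-free/projective, so $\H_i \Hom_Q(X^{\otimes n}, X) = \Ext^i_Q(R^{\otimes n}, R)$ in positive degrees, and in degree $0$ it is $\Hom_Q(R^{\otimes n}, R)$. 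The induced map of $\Phi_n$ on homology is the corresponding combination of ordinary multiplications on $R$, which equals zero because it is precisely the $n$-fold associator of a strictly associative algebra. Hence a null-homotopy $m_n$ exists, which we then correct to be strictly unital using the standard trick of averaging against the unit $u$.

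The module case is entirely parallel: start with $\mu_2\colon X \otimes_Q Y \to Y$ as any lift of the $R$-action on $M$ through the projective resolution $X \otimes_Q Y$ of $R \otimes_Q M$, and build $\mu_n$ inductively, with the obstruction again a cycle by combinatorial cancellation among lower Stasheff identities (now mixing $m_i$'s and $\mu_j$'s) and null-homologous because on $\Ext^0$ it computes a strict-associativity relation for the $R$-action on $M$.

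The main obstacle is the sign bookkeeping in the verifications (i) and (ii): the Stasheff identities carry nontrivial Koszul signs $(-1)^{r+st}$ and the cancellation relies on these signs matching exactly across the double sum. This is the heart of the argument and is where Burke's Proposition 3.6 does genuine work beyond classical references; I would organize the computation by grouping the terms of $\partial(\Phi_n)$ according to which pair $(m_s, m_t)$ they involve and showing each such pair-contribution is annihilated by one of the lower Stasheff relations.
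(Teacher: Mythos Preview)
The paper does not supply its own proof of this proposition; it is quoted directly from \cite{Bur-15} and invoked as a black box. So there is no in-paper argument to compare against.

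Your outline is the standard obstruction-theoretic construction and is essentially what Burke carries out. One point to tighten: your explanation of why $[\Phi_n]=0$ is accurate only for $n=3$, where $\Phi_3$ has degree $0$ and its homology class in $\Hom_Q(R^{\otimes 3},R)$ is literally the associator of the (associative) product on $R$. For $n\geq 4$ the obstruction $\Phi_n$ sits in degree $n-3>0$, and the relevant homology group $H_{n-3}\bigl(\Hom_Q(X^{\otimes n},X)\bigr)$ vanishes outright because $X^{\otimes n}$ is a $Q$-projective resolution of a module concentrated in degree $0$ and $X$ is quasi-isomorphic to $R$; equivalently, $\Ext^{-(n-3)}_Q(R^{\otimes n},R)=0$. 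So the vanishing for $n\geq 4$ is for degree reasons, not because of any residual associator identity. Your line ``$\H_i \Hom_Q(X^{\otimes n}, X) = \Ext^i_Q(R^{\otimes n}, R)$ in positive degrees'' has the sign of $i$ reversed, which obscures this. Otherwise the sketch is sound.
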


\begin{chunk}\label{chu-bar}
Our essential technique to find summands of $k$ in syzygies is to use the \emph{relative} bar resolution, first developed by Iyengar for dg structures \cite{Iye-97}. 
In this case, the relative bar resolution arises as the totalization of the classical bar complex~\cite{Eilenberg/MacLane:1951}.
We use the relative bar resolution in this form to obtain \cref{thm:general}.
For \cref{thm:golod}, it is essential that the input data be minimal resolutions, and so we must make use of a generalization of the relative bar resolution to $A_\infty$ structures.
This generalization was developed by Burke \cite{Bur-15} and Positselski \cite{Positselski:11}.
We recall the constructions here. 

Let $X\simeq_Q R$ be a $Q$-free resolution with an $A_\infty$ algebra structure, and let $Y\simeq_Q M$ be a $Q$-free resolution with an $A_\infty$ $X$-module structure.
Let $u\colon Q\rightarrow X$ be the unit map, and set $\overline{X}=\shift\coker u$, where $\shift F$ of a complex $F$ is the complex with $\shift F_n = F_{n-1}$ and differential $-\del_F$. 
In this paper, since $Q\rightarrow R$ is surjective, we may identify $\overline X$ with $\shift (X_\geq 1)$.

 For each $n$, define \[B_n(R,X,Y)=\bigoplus\limits_{i_1+\dots+i_p+j+p=n}R\otimes_Q\overline X_{i_1}\otimes_Q\dots\otimes_Q\overline X_{i_p}\otimes Y_{j}.\]
As is traditional for the bar complex, we denote the element $r \otimes x_1 \otimes x_2 \otimes \dots \otimes x_p \otimes y$ of $B_n(R,X,Y)$ by 
$$
r[x_1|x_2|\dots|x_p]y
$$
which is said to be of \emph{bar length} $p$.
The differential is the signed sum of all $A_\infty$ operations applied to every substring of appropriate bar length:

\begin{align*}
\del(r[x_1|\dots|x_p]y)
&=
\sum_{i=1}^p \sum_{j=0}^{p-i} 
\pm\,
r[x_1|\dots|x_j|m_i(x_{j+1}\otimes\dots\otimes x_{j+i})|x_{j+i+1}|\dots|x_p]y
\\
&+
\sum_{i=1}^{p+1}
\pm\,
r[x_1|\dots|x_{p-i+1}]\mu_i(x_{p-i+2}\otimes \dots\otimes x_p\otimes y)
\end{align*}
where, for visual simplicity, we suppress the exact signs, which are irrelevant to our application.
The augmentation is given by
\[
\varepsilon(r[x_1|\dots|x_p]y)
=\begin{cases}
r\varepsilon^ Y(y) & {\textrm{ if }} p=0 \\
0 & {\textrm{ if }} p>0
\end{cases}
\]

In the dg setting, $m_{\geq 3}=\mu_{\geq 3}=0$ and so the differential simplifies as follows.
Here we do indicate the signs as we will need them.
Writing concatenation in place of $m_2$ and $\del$ in place of $m_1$ and $\mu_1$, the differential is:

\begin{align*}
    \sum\limits_{t=1}^{p}(-1)^{|x_1|+\cdots +|x_{t-1}|
    }&\, r[x_1|\dots|\del(x_t)|\dots|x_p]y
    \\
    &\hspace{-20pt}+(-1)^{|x_1|+\cdots + |x_p|}r[x_1|\dots x_p]\del(y)
    \\
    + \sum\limits_{t=1}^{p-1} (-1)^{|x_1|+\cdots + |x_{t-1}|}&\, r[x_1|\dots|x_tx_{t+1}|\dots|x_p]y
    \\
    &\hspace{-20pt}+(-1)^{|x_1|+\cdots + |x_{p-1}|}r[x_1|\dots| x_{p-1}]x_py 
    \\
\end{align*}
where $|x_i|$ refers to the degree of $x_i$ in the shifted complex $\overline X$.
Note that the formula for $d$ does not have a term involving multiplication by the factor of $R$; heuristically, this is because the product $R \cdot \overline{X}$ vanishes as $R=\H_0(X)$. 
\end{chunk}
    
\section{Preliminaries: Burch cycles}
\label{sec-preliminaries}

In this section, given a local (or graded-local) homomorphism $\vp\colon Q \to R$, we find some distinguished cycles in any $Q$-free resolution of $R$ when the Burch index of $\vp$ is positive. For ease, we work in a minimal resolution but the resulting cycles therefore exist in any resolution. 

\begin{chunk}\label{chu-fij}
Let $X$ be the minimal resolution of $R$ over $Q$, and let $e_1,\dots,e_m$ be a basis of $X_1$,  $a_1,\dots, a_m$ a minimal generating set of $I$, and $\del(e_i)=a_i$.
When $\Burch(R)>0$, one or more of the generators $a_{j_i}$ are multiples of socle elements as described in \ref{dfn-burch}.
With $a_{j_i}=x_is_i$ for some elements $x_i\in \n\backslash\BI(\vp) \subseteq \n\backslash \n^2$ and element $s_i\in (I : \n)$, take any $x\in \n\setminus\n^2$ whose image in $\n/\n^2$ is not in  the span of the image of $x_i$. Then, since $xs_i$ is in $I$, one may write 
\[
\del(xe_i)=x(x_is_i)=x_i(xs_i)
=x_i\left(\sum_\ell r_\ell^{xx_i} a_\ell \right)
=\del\left( x_i\left(\sum_\ell r_\ell^{xx_i} e_\ell \right)\right)
\]
for some elements $r_\ell^{xx_i}\in Q$ and so the element 
\[
\omega_{xx_i}=xe_i-x_i\left(\sum_\ell r_\ell^{xx_i} e_\ell \right)
\]
is a cycle in $X_1$, hence a boundary, so fix $f_{x,x_i}\in X_2$ with 
\[
\del(f_{x,x_i})=\omega_{xx_i}. 
\]

Note that the conditions on $x$ ensure that each $\omega_{xx_i}$ is a minimal generator of the set of degree one cycles $Z_1(X)$: Since $\del^X_1 \colon X_1\rightarrow I$ identified a minimal generating set, one has $Z_1(X)\subset \n X_1$, so $\n Z_1(X)\subset \n^2 X_1$.
As $x$ is independent from $x_i$, the image $\omega_{x,x_i}$ is non-zero in $\n X_1/\n^2 X_1$, and hence $\omega_{x,x_i}$ is nonzero in $\Z_1(X)/\n Z_1(X)$.  
Therefore, the chosen preimage $f_{x,x_i}$ is part of a basis of $X_2$.

The above is extended to all of $\n/\BI(\vp)$ by the following lemma. 
Note that the obvious choices of $r_\ell$ in the construction above yield $\omega_{x_i,x_j}=-\omega_{x_j,x_i}$ and $\omega_{x_i,x_i}=0$; hence we only include those for $i<j$:

\begin{lemma}\label{lem-independent}
Let $\overline{x_1},\dots,\overline{x_b}$ be a basis of $\n/\BI(I)$, and extend it to a basis $\{\overline{x_1},\dots,\overline{x_n}\}$ of $\n/\n^2$.
The image of the set
\[\{\omega_{x_j,x_i}=x_je_i-x_i\left(\sum_\ell r_\ell^{x_j,x_i} e_\ell \right)\,|\,i<j,\,1\leq i\leq b\}\]
in $Z(X_1)/\n Z(X_1)$ is linearly independent, and hence in any completion to a resolution $X$, there are basis elements $f_{x_j,x_i}\in X_2$ mapping to each corresponding $\omega_{x_j,x_i}$.

Furthermore, for each $i,j$, we have that $\del(f_{x_j,x_i})\notin BI(I)X_1$.
\end{lemma}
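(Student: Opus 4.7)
The plan is to reduce both claims to linear-algebra computations in the associated graded pieces of $X_1$. Minimality of $X$ makes $\del\colon X_1 \to I$ send the basis $\{e_1,\dots,e_m\}$ to a minimal generating set of $I$, so every cycle lies in $\n X_1$ and consequently $\n Z_1(X) \subseteq \n^2 X_1$. This yields a natural $k$-linear map $Z_1(X)/\n Z_1(X) \to \n X_1/\n^2 X_1 \isom (\n/\n^2) \otimes_k (X_1/\n X_1)$, so it suffices to prove linear independence of the images
\[
\overline{\omega_{x_j,x_i}} \;=\; \bar x_j \otimes \bar e_i \;-\; \sum_\ell \overline{r_\ell^{x_j,x_i}}\,\bar x_i \otimes \bar e_\ell
\]
in the target, whose natural basis is $\{\bar x_\alpha \otimes \bar e_\beta\}_{\alpha\le n,\,\beta\le m}$.

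To check this independence, I would compare coefficients of $\bar x_{j_0}\otimes \bar e_{i_0}$ in a putative relation $\sum c_{j,i}\,\omega_{x_j,x_i}=0$. The key observation is that every ``secondary'' term $\bar x_i \otimes \bar e_\ell$ appearing in some $\overline{\omega_{x_j,x_i}}$ has first tensor-factor indexed by an $i\le b$. Hence when $j_0>b$, the only contribution to the coefficient of $\bar x_{j_0}\otimes\bar e_{i_0}$ is $c_{j_0,i_0}$ itself, forcing $c_{j_0,i_0}=0$. The remaining equations (for $j_0\le b$) read $c_{j_0,i_0} = \sum_{j'>j_0} c_{j',j_0}\,\overline{r_{i_0}^{x_{j'},x_{j_0}}}$ and yield to a downward induction on $j_0$ starting at $j_0 = b$, where the sum is empty. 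Once independence in $Z_1(X)/\n Z_1(X)$ is established, the basis elements $f_{x_j,x_i}\in X_2$ come from the minimality isomorphism $\bar\del\colon X_2/\n X_2 \tosim Z_1(X)/\n Z_1(X)$ by lifting linearly independent classes to part of a $Q$-basis of $X_2$.

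For the final assertion, my plan is to pass to $X_1/\BI(\vp)X_1$, which is free over $Q/\BI(\vp)$ on $\bar e_1,\dots,\bar e_m$, and to examine the coefficient of $\bar e_i$ in the image of $\omega_{x_j,x_i}$, namely $x_j - r_i^{x_j,x_i}\, x_i$. Since $\n^2 \subseteq \BI(\vp)$, the quotient $\n/\BI(\vp)$ is a $k$-vector space in which $\bar x_1,\dots,\bar x_b$ forms a basis, so when $j\le b$ the classes $\bar x_i$ and $\bar x_j$ are $k$-linearly independent there, forcing $x_j - r_i^{x_j,x_i} x_i \notin \BI(\vp)$ and hence $\omega_{x_j,x_i} \notin \BI(\vp)X_1$. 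The main obstacle I anticipate is the case $j > b$: there $\bar x_j$ may coincide with a $k$-scalar multiple of $\bar x_i$ modulo $\BI(\vp)$, so the $\bar e_i$-coordinate can lie in $\BI(\vp)$. I would address this either by refining the extension $x_{b+1},\dots,x_n$ of the basis so that each $\bar x_j$ with $j>b$ remains transverse to $\bar x_i$ modulo $\BI(\vp)$, or by exploiting the ambiguity in the coefficients $r_\ell^{x_j,x_i}$ to arrange that some coordinate $-r_\ell^{x_j,x_i}x_i$ with $\ell\ne i$ is a unit multiple of $x_i$, hence outside $\BI(\vp)$.
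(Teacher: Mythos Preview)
Your approach is essentially the paper's. For linear independence you pass to $\n X_1/\n^2 X_1$ and run a downward induction on the first index; the paper does the same thing more tersely by ordering the basis $\{x_j e_i\}$ lexicographically (first by the $x$-index) and observing that each $\omega_{x_j,x_i}$ equals the basis vector $x_je_i$ minus a combination of strictly smaller basis vectors $x_ie_\ell$. Both arguments are the same idea.

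For the second claim you also match the paper: both of you look at the $e_i$-coefficient $x_j - r_i^{x_j,x_i}x_i$ and argue it is nonzero modulo $\BI(\vp)$. The paper's justification is that ``$x_j$ and $x_i$ are linearly independent in $\n/\n^2$ and $x_j\notin\BI(I)$''; as you correctly anticipate, this only settles the case $j\le b$, where in fact $\bar x_i,\bar x_j$ are independent in $\n/\BI(\vp)$. The paper does not address $j>b$ either, and its downstream uses (\cref{lem:general}, \cref{thm:golod}) only require $1\le i<j\le b$. Your proposed fixes for $j>b$ are plausible but not guaranteed to work in general (e.g.\ one cannot always refine the extension so that every $\bar x_j$ is transverse to every $\bar x_i$ over small residue fields, and there is no a~priori control over which $r_\ell^{x_j,x_i}$ are units), so the cleanest resolution is simply to restrict the second assertion to $j\le b$.
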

\begin{proof}
    Order the basis $\{x_je_i\}$ of $\n X_1/\n^2 X_1$ lexicographically. The set $\{\omega_{x_jx_i}\}$ is  obtained from a subset of the basis $\{x_je_i\}$ by subtracting a linear combination of basis elements that are strictly smaller in the ordering, and so they are independent. 

    Note that in the sum, when $\ell=i$ the image of the coefficient $x_j-x_i r_i^{x_j,x_i}$ of $e_i$ in $\n X_1/\BI(I) X_1$ is non-zero since $x_j$ and $x_i$ are linearly independent in $\n/\n^2$ and $x_j\not\in\BI(I)$. 
\end{proof}
\end{chunk}

Once we build cycles in the bar complex from these distinguished cycles in $X$ that generate copies of $k$ in the syzygies of a module, we show that they split out of the syzygies with the following modified version of \cite[Proposition 3.4]{DaoEis-23}. 

\begin{lemma}[\cite{DaoEis-23}]\label{lem-splitting}
Let $\vp\colon (Q,\n,k)\rightarrow (R,\m, k)$ be a surjective local homomorphism.
Let $\del\colon F\rightarrow G$ be a map of free $Q$-modules and $\rho\in F$ be nonzero in $F/\n F$.
Suppose that $\del(\rho)\in \n G$ but $\del(\rho)\notin \BI(\vp)G$, equivalently, there exists an $s\in (I : \n)$ with 
\[
s \del(\rho)\in I G{\textrm{ but }}s \del(\rho)\notin \n I G.
\]
Then the element $\bar{s}\rho$ generates a direct summand of $\ker(R\otimes_Q \del)$ isomorphic to $k$.
\end{lemma}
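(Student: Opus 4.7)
The plan is to check the three ingredients needed for $\bar{s}\rho$ to generate a direct $k$-summand of $\ker(R\otimes_Q\del)$: that $\bar{s}\rho$ lies in the kernel, that the cyclic submodule $R\bar{s}\rho$ is isomorphic to $k$, and that this submodule splits off.

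The first two are immediate manipulations. Membership: $\del(s\rho)=s\del(\rho)\in IG$ by hypothesis, so $\overline{s\rho}=0$ in $G/IG=R\otimes_Q G$. Annihilation: $s\in(I:\n)$ gives $\n s\subseteq I$, hence $\m\bar{s}\rho = 0$ in $F/IF$. Nonvanishing: because $\rho\notin \n F$, I would extend $\rho$ to a $Q$-basis of $F$; then $\bar{s}\rho=0$ would force $s\in I$, but this together with $\del(\rho)\in\n G$ would give $s\del(\rho)\in I\cdot\n G = \n I G$, contradicting the hypothesis. So $R\bar{s}\rho = k\bar{s}\rho\cong k$.

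For the splitting, the plan is to show that $\bar{s}\rho$ is a minimal generator of $\ker(R\otimes\del)$, i.e., $\bar{s}\rho\notin \m\ker(R\otimes\del)$. Granted this, finiteness of the kernel (as an $R$-submodule of the finitely generated $R$-module $F/IF$) and Nakayama produce a $k$-basis of $\ker/\m\ker$ containing $\bar{s}\rho$; the corresponding coordinate projection $\ker \to k$ is $R$-linear and retracts the inclusion $k\bar{s}\rho\hookrightarrow \ker$, yielding the desired direct summand. To establish the minimality, I would argue by contradiction: assume $\bar{s}\rho = \sum_j \bar{m}_j\bar{\rho}_j$ with $\bar{m}_j\in\m$ and $\bar{\rho}_j\in\ker$, lift to an equation $s\rho = \sum_j m_j\rho_j + \alpha$ in $F$ with $m_j\in\n$, $\del(\rho_j)\in IG$, and $\alpha\in IF$, then apply $\del$:
\[
s\del(\rho) \;=\; \sum_j m_j\,\del(\rho_j) \;+\; \del(\alpha).
\]
The first sum lies in $\n\cdot IG = \n I G$. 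Writing $\alpha=\sum_k a_k v_k$ with $a_k$ minimal generators of $I$ and $v_k\in F$, the second is $\del(\alpha) = \sum_k a_k\del(v_k) \in I\cdot\n G = \n I G$, using that the entries of $\del$ lie in $\n$ (i.e., $\del(F)\subseteq \n G$), as is the case for differentials in the minimal complexes to which this lemma is applied. Hence $s\del(\rho)\in \n I G$, against the hypothesis.

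The crux is this last computation, specifically controlling $\del(\alpha)$: it pivots on the minimality $\del(F)\subseteq \n G$ of the map $\del$, which is automatic in the intended bar-resolution setting. Once that is in place, the rest of the argument is a standard bookkeeping with colon ideals followed by the Nakayama-style coordinate projection.
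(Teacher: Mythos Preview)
The paper does not supply a proof of this lemma; it is quoted (as a ``modified version'') from \cite{DaoEis-23} and used as a black box. So there is no argument in the paper to compare against.

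Your proof follows the natural route and each step is sound: $\bar s\rho$ lies in the kernel, it is a nonzero socle element, and splitting follows once you know it is a minimal generator of $\ker(R\otimes_Q\del)$. The only point that is not self-contained is the one you already flag: to force $\del(\alpha)\in\n IG$ for $\alpha\in IF$ you invoke $\del(F)\subseteq\n G$, which is not among the stated hypotheses.

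That extra hypothesis is not a mere convenience---without it the lemma is actually false. Take $Q=k[[x]]$, $I=(x^2)$, $R=Q/I$, $F=Q^2$ with basis $f_1,f_2$, $G=Q$, and set $\del(f_1)=x$, $\del(f_2)=1$. With $\rho=f_1$ one has $\del(\rho)=x\in\n G\setminus\BI(\vp)G$ since $\BI(\vp)=(x^2)$, and for $s=x\in(I:\n)$ one checks $s\del(\rho)=x^2\in IG\setminus\n IG$. Yet $\ker(R\otimes_Q\del)=R(\bar f_1-x\bar f_2)\cong R$ is indecomposable, so $k$ is not a summand, and indeed $\bar s\rho=x\bar f_1=x(\bar f_1-x\bar f_2)\in\m\ker$. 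Thus your proof is complete and correct \emph{once} one adds the minimality assumption $\del(F)\subseteq\n G$ to the statement; this holds in the paper's Golod application where the bar resolution is minimal, and is the hypothesis present in the original Dao--Eisenbud formulation.
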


Note that the conditions of the lemma are equivalent to $R\otimes \del(s\rho)=0$ for any $s\in (I:\n)$ chosen so that $\del(s\rho)\notin \n I$, which is what we will check when we apply the lemma.


\begin{chunk}\label{chu:exampleBIone}
As an illustration for what goes wrong with our arguments for \cref{thm:general,thm:golod} if a map has only Burch index at most 1, here we review the example of Dao and Eisenbud of a ring of absolute Burch index 1 and a module with no syzygies containing a summand isomorphic to $k$ \cite[3.4]{DaoEis-23}. We show that the cycles we built do generate modules isomorphic to $k$, but that we cannot conclude they are direct summands.

With $\vp\colon Q=k[x,y] \to R=k[x,y]/(x^2,y)^2=k[x,y]/(x^4,x^2y,y^2)$ and $M=R/(x^2,y)$, we have for $I=\ker\vp=(x^2,y)^2$ that $\BI(I)=(x^2,y)$, so $\n/\BI(I)=kx$.
Then $x^4=x\cdot x^3=xs_2$ and $x^2y=x\cdot xy=xs_1$ are the only ways to write the minimal generators $\{x^4,x^2y,y^2\}$ as elements of $\n (I:\n)$.

In the minimal resolution $X$ of $R$, as in \cref{chu-fij}, we have
$\del(e_1)=x^4$ and $\del(e_2)=x^2y$, and the corresponding Burch cycle is $ye_{2}-x^2e_{1}$.
Recall we denote a preimage of this cycle by $f_{y,x}\in X_2\setminus \n X_2$.
To employ Dao-Eisenbud's lemma to elements in the bar resolution involving this cycle, one would need that $\del(sf)\notin \m IX$ for some socle element $s\in (I:\n)=(x^3,xy,y^2)$. 
As the coefficients of $\del f$ are contained in $\BI(I)=(y,x^2)$, we have that $\del(sf)=s(ye_{2}-x^2e_{1})\in\n IX$ for all $s\in (I:\n)$.

This explains why our set of distinguished Burch cycles $f_{x_i,x_j}$ always involves choosing two elements $x_i$, $x_j$ whose images are independent in $\n/\BI(I)$ and hence we require Burch index at least 2.
\end{chunk}

\section{General case}
\label{sec-general}
Let $\vp\colon Q \to R$ be a local (or graded-local) homomorphism of rings, and let $M$ be a finitely generated $R$-module. 
Set $b$ to be the Burch index of $\vp$. 
In this section, we tackle the general case of any finitely generated $R$-module when $\Burch(\vp)\geq 2$. 
We work first to find summands of cycles in a bar resolution constructed from nonminimal resolutions with dg structures, and show that this induces a summand of each sufficiently high syzygy of $M$. 
We begin by describing the details of the resolutions we will use.

\begin{chunk}\label{chu-general-setup}
First, as in \cref{chu-dga}, take $X\xra{\simeq}_QR$ to be a dg algebra resolution of $R$ over $Q$ such that there is a basis $e_1,\dots,e_m$ of $X_1$ with $\del(e_i)=a_i$. 
As described in \cref{chu-fij}, there is an associated distinguished set of elements $\{f_{x_j,x_i}\,|\,i<j\} \subseteq X_2$ which the differential maps to the cycles constructed there and which are $Q$-linearly independent by \cref{lem-independent}.

Next, as in \cref{chu-dga}, we take $Y\xra{\simeq}_Q M$ to be a semifree dg $X$-module resolution $Y$ of $M$ but constructed with care so that there is dg module map $\psi\colon X\rightarrow Y$ that is degree-wise a split injection.
As described in \cite{Avramov:1999}, the first step is to take a $Q$-linear surjection $Q^n \to M$ and lift it to a dg $X$-module map $Y(0):=X^n \to M$ where $X^n=Q^n\otimes_Q X$; this is trivially possible as $\im \del^X_1=I\subseteq\ann_QM$. 
Continuing to kill cycles, one constructs a semifree dg $X$-module resolution $Y$ of $M$ with the property that there is an injection $Y(0) \hookrightarrow Y$. 
The composition 
\[
\psi \colon X\xra{\iota}X^n=Y(0)\hookrightarrow Y, 
\]
where $\iota$ is any injection, is an injective dg module map. 
The choices of images at each step are basis elements, so $\iota$ splits degree-wise.
\end{chunk}


This is elucidated in the following example.

\begin{example}\label{exa:structure}
    Let $Q=k[x]$, $R=k[x]/(x^2)$, and $M=k=k[x]/(x)$.
    Then $X$ can be taken to be the Koszul complex $Q\langle e\,|\,\del(e)=x^2\rangle$,
    and $Y(0)=X$, so $\psi$ can be defined by setting $\iota$ to be the identity.
\end{example}

We begin by finding $k$-summands in the (possibly non-minimal) bar resolution. 

\begin{lemma}\label{lem:general}
Let $\vp\colon Q \to R$ be a local or graded local homomorphism, and let $M$ be a finitely generated $R$-module. 
Suppose $\Burch(\vp)=b\geq 2$.

Let $X$ be a dg algebra resolution of $R$ over $Q$ as in \cref{c:dga-existence}, and let $Y$ be a semifree dg module resolution of $M$ over $X$ with a dg $X$-module map $\psi\colon X \to Y$ as constructed in \cref{chu-general-setup}. 
Let $B=B(R,X,Y)$ be their associated bar resolution of $M$ over $R$, and let $e\in X_1\setminus \n X_1$.
For $i<j$, set
\[
\rho_{i,j}
= 1[f_{x_j,x_i}|
\underbrace{e|\dots|e}_{\frac{q-4}{2}}
]\psi(e)
- 1[e f_{x_j,x_i}|
\underbrace{e|\dots|e}_{\frac{q-4}{2}}
]\psi(1)
\in B_{q}
\]
when $q\geq 4$ is even and
\[
\rho_{i,j}
= 1[e f_{x_j,x_i}|
\underbrace{e|\dots|e}_{
\frac{q-5}{2}}
]\psi(e)
\in B_{q}
\]
when $q\geq 5$ is odd.
Then $\alpha_{i,j}=s_i\rho_{i,j}$ span a nonzero $k$-vector space that is a direct summand of $\ker(\del^B_q)$.
\end{lemma}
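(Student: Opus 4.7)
The strategy is to apply \cref{lem-splitting} to each $\rho_{i,j}$ individually, treating $\del^B_q\colon B_q\to B_{q-1}$ as a $Q$-linear map of $Q$-free modules, and then to verify that the resulting $k$-summands assemble into a single joint $k$-summand. First one checks that $\rho_{i,j}$ is nonzero modulo $\n B_q$: its defining bar expressions have first-bar factor $f_{x_j,x_i}$ or $ef_{x_j,x_i}$, and since the $f_{x_j,x_i}$ form part of a $Q$-basis of $X_2$ by \cref{lem-independent} and the underlying algebra structure of $X$ is free graded-commutative, these bar monomials are part of a $Q$-basis of $B_q$.

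The main computation is $\del^B(\rho_{i,j})$ via the explicit dg bar differential from \cref{chu-bar}. Three simplifications kill almost every term: the differential of any internal $e$ factor is $\del e=a_k\in I$, so the corresponding bar term, having $R$ on its left, vanishes in $B$; the product $e\cdot e=0$ by graded-commutativity of $X$, which eliminates the inner $m_2$ terms; and $\del\psi(e)=\psi(\del e)=a_k\psi(1)$ is killed for the same reason. In the even case the two summands of $\rho_{i,j}$ are designed so that the ``multiplication of $f_{x_j,x_i}$ into the adjacent $e$'' appearing in the first cancels (up to sign) the ``differential of $ef_{x_j,x_i}$'' appearing in the second, using $f_{x_j,x_i}e=ef_{x_j,x_i}$. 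What survives is, up to signs,
\[
1[\omega_{x_j,x_i}|e|\cdots|e]\psi(e)\;\pm\;1[e\omega_{x_j,x_i}|e|\cdots|e]\psi(1),
\]
with only the analog of the second summand (carrying an extra $e$) surviving in the odd case. Expanding $\omega_{x_j,x_i}=x_je_i-x_i\sum_\ell r_\ell e_\ell$, every coefficient lies in $\n$, so $\del^B(\rho_{i,j})\in \n B_{q-1}$. Moreover the coefficient of the bar monomial supported on $e_i$ in the first tensor slot is $x_j-x_ir_i^{x_j,x_i}$, which by \cref{lem-independent} is not in $\BI(\vp)$.

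Hence \cref{lem-splitting} applies: $\alpha_{i,j}=s_i\rho_{i,j}$ satisfies $\del^B(\alpha_{i,j})=s_i\del^B(\rho_{i,j})\in s_i\n B_{q-1}\subseteq IB_{q-1}=0$, so $\alpha_{i,j}\in\ker\del^B_q$, and the lemma gives that $\overline{\alpha_{i,j}}$ generates a direct $k$-summand of $\ker(R\otimes_Q\del^B_q)=\ker\del^B_q$. For joint independence of the family, each $\alpha_{i,j}$ has support on a bar monomial whose first tensor factor is the distinct basis element $f_{x_j,x_i}$ (respectively $ef_{x_j,x_i}$), with scalar the nonzero socle element $\overline{s_i}\in\soc(R)$. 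Hence the $\alpha_{i,j}$ are $k$-linearly independent in the socle of the free $R$-module $B_q$, and the basis-wise projection onto the $k$-span of the relevant monomials restricts to an $R$-linear splitting of the inclusion of this $k$-span into $\ker\del^B_q$.

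The main obstacle is the precise bookkeeping in the middle step: tracking the Koszul signs in the dg bar differential and confirming that the bar-multiplication terms and the differential-of-product terms pair up to cancel exactly as intended in the two-term even definition of $\rho_{i,j}$, together with handling the internal $e$-factors uniformly for all $q\geq 4$. Once the cancellation is verified and only the Burch-cycle contribution $\omega_{x_j,x_i}$ remains, the argument plugs directly into the splitting mechanism of \cref{lem-splitting}.
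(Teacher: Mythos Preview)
Your proposal is correct and follows the same approach as the paper: compute enough of $\del^B(\rho_{i,j})$ to verify the hypotheses of \cref{lem-splitting}, then conclude. You also make explicit the joint splitting of the family $\{\alpha_{i,j}\}$ via coordinate projection, which the paper leaves implicit.

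There is, however, one inaccuracy in your even-case bookkeeping. You claim the product term $[f_{x_j,x_i}e|e|\cdots]\psi(e)$ from the first summand cancels against the term coming from $m_1$ applied to $ef_{x_j,x_i}$ in the second; but $f_{x_j,x_i}e\in X_3$ while $\partial(ef_{x_j,x_i})\in X_2$, so these live in different graded pieces of $B_{q-1}$ and cannot pair off. The actual cancellation (and this is what the paper records) is between that first-summand product term and the \emph{last} $\mu_2$-term of the second summand, namely $-[ef_{x_j,x_i}|e|\cdots]\,e\psi(1)$, using $e\psi(1)=\psi(e)$ together with $f_{x_j,x_i}e=ef_{x_j,x_i}$. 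Your stated list of survivors $1[\omega_{x_j,x_i}|e|\cdots]\psi(e)\pm 1[e\omega_{x_j,x_i}|e|\cdots]\psi(1)$ is nonetheless correct, so the remainder of your argument goes through unchanged; only the description of which two terms pair off needs repair.
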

\begin{proof}
    First we verify that the elements $\alpha_{i,j}$ are indeed cycles in $B$. 
Applying the portion of the differential of $B(R,X,Y)$ which comes from the differentials of $X$ and $Y$ yields 0 since $s_i\in (I:\m)$, and the resolutions are minimal on the elements involved in the expression, that is, one has 
$\del(e), \del(f_{x_j,x_i}) \in \m X$ 
and $\del(\psi(e))=\psi(\del(e)) \in \m Y$.
Note that the tensors are over $Q$, and so the element $s_i$ can be moved across them.

The remaining terms come from the bar portion, denoted by $d$, of the differential.  
Many terms vanish from the fact that $e^2=0$ and $e\psi(e)=\psi(e^2)=0$, and the terms of $d$ involve products of adjacent factors.
When $q$ is odd, this causes all terms of $d$ to vanish.
When $q$ is even, the only terms remaining are
\[
s_i[f_{x_j,x_i}e|\dots|e
]\psi(e)
- s_i[e f_{x_j,x_i}|e|\dots
]e\psi(1)
=0
\]
where the first term comes from the multiplication of the first two bar (interior) factors of the first term of $\alpha_{i,j}$,
the second term comes from the multiplication of the last two tensor factors of the second term of $\alpha_{i,j}$, and the sign comes from the fact that $ef_{x_j,x_i}$ and each shifted $e$ has even degree.

The elements $\rho_{i,j}$ are part of a basis of $B_q$.
Lifting the differential $\del\colon B_q\rightarrow B_{q-1}$ to $Q$ and $\rho_{i,j}$ to a basis element $\rho=\widetilde{\rho_{i,j}}$, the assumptions on $f_{x_j,x_i}$ ensure that $\del(\rho)\notin \BI(\vp)B_{q-1}$. 
Hence each of these cycles generate a $k$-summand of the $q$-th syzygy by \cref{lem-splitting}, which suffices to yield the result. 
\end{proof}

\begin{remark}
As we are not presuming $M$ is $\vp$-Golod, the $A_\infty$ bar resolution cannot be expected to be minimal, even when the starting resolutions are minimal.
In light of this, we elect to use simpler dg structures, rather than the $A_\infty$ structure used in \cref{thm:golod} \cref{chu-bar}.
Using dg structures simplified the analysis of the cycles in the proof of \cref{lem:general}.
\end{remark}

To prepare for projecting to a minimal resolution sitting inside the bar resolution $B$ in the lemma above, we prove the following.

\begin{lemma}\label{lem:summand-free}
Let $(R,\m,k)$ be a local or graded. If $R$ is not a field, then $k$ cannot be a direct summand of a free $R$-module $R^c$. 
\end{lemma}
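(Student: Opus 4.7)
The plan is to argue by contradiction. Suppose $k$ were a direct summand of $R^c$. Then there would exist an $R$-linear splitting, i.e.\ a map $g\colon R^c\to k$ whose restriction to the copy of $k$ inside $R^c$ is an isomorphism. Writing the image of $1\in k$ as $x=(x_1,\dots,x_c)\in R^c$, the condition that $\m$ annihilates $k$ forces $\m x=0$, hence each coordinate $x_i$ lies in $\soc(R)$, and moreover $g(x)=1\in k$.

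The next step is to observe that any $R$-linear map $g\colon R^c\to k$ factors through the canonical surjection $R^c\twoheadrightarrow R^c/\m R^c\cong k^c$. Consequently, writing $\bar{x_i}$ for the class of $x_i$ in $R/\m=k$ and $e_1,\dots,e_c$ for the standard basis of $R^c$, one obtains
\[
g(x)=\sum_{i=1}^{c}\bar{x_i}\,g(e_i)\,.
\]

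The key observation, and really the only ingredient specific to the hypothesis that $R$ is not a field, is that when $R$ is not a field one has $\soc(R)\subseteq \m$: any element of $\soc(R)\setminus \m$ would be a unit annihilated by $\m$, forcing $\m=0$. Applying this to each $x_i$ gives $\bar{x_i}=0$ for all $i$, so the displayed formula yields $g(x)=0$, contradicting $g(x)=1$.

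I do not foresee any real obstacle here; the argument is elementary and the only mild point to check is that the graded-local case goes through identically once $\m$ is interpreted as the homogeneous maximal ideal and the splitting is required to be graded. If desired, the contradiction can equivalently be phrased by noting that the composite $k\hookrightarrow R^c\twoheadrightarrow k^c$ would be a split injection, whereas its image is contained in $\soc(R)^c\subseteq \m^c$, which maps to zero in $k^c$.
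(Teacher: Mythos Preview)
Your argument is correct. The paper, however, takes a shorter conceptual route: if $k$ were a direct summand of $R^c$ it would be projective, and over a local or graded-local ring every finitely generated projective module is free \cite[1.5.15(d)]{Bruns/Herzog:1993}; but $k$ free forces $R=k$, contradicting the hypothesis that $R$ is not a field.

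Your approach avoids invoking the ``projective implies free'' theorem and instead works directly with elements: the image of $1\in k$ lands in $\soc(R)^c\subseteq \m^c$, while any splitting $R^c\to k$ kills $\m R^c$, forcing the contradiction $1=0$. This is more self-contained and perhaps more transparent for a reader unfamiliar with the structure theory of projectives over local rings, at the cost of a few extra lines. Both arguments handle the graded case without difficulty, and neither is materially stronger than the other for the purposes of the paper.
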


\begin{proof}
If $k$ is a summand of $R^c$, then it is projective and hence free since $(R,\m,k)$ is local or graded \cite[1.5.15(d)]{Bruns/Herzog:1993}.
But $k$ is not free unless $R$ is a field.
\end{proof}

\begin{lemma}\label{lem:nonminimal-to-minimal}
Let $(R,\m,k)$ be a local or graded ring, Let $G$ be a free resolution of $M$, and let $G= F\oplus C$ be a decomposition of $G$ with $F$ a minimal resolution of $M$ and $C$ contractible.
Suppose $f+c$ generates a direct $k$-summand of $\ker(\del^G_i)$, with $f\in F_i$ and $c\in C_i$.
Then $f$ generates a direct $k$-summand of $\ker(\del^F_i)=\syz_i(M)$.
\end{lemma}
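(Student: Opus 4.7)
The plan is to reduce to the following criterion, valid for any $R$-module $N$ over a local (or graded-local) ring $(R,\m,k)$: an element $x \in N$ generates a direct $k$-summand if and only if $x \in \soc(N) \setminus \m N$. The forward direction is a Nakayama argument applied to any retraction $\pi\colon N \twoheadrightarrow kx$: writing $x = \sum r_i n_i$ with $r_i \in \m$ and $\pi(n_i) = s_i x$, one obtains $(1 - \sum r_i s_i)x = 0$, forcing $x = 0$. Conversely, if $x \in \soc(N)\setminus \m N$, extend $\bar x$ to a $k$-basis of $N/\m N$ and project onto $k\bar x$ to obtain a surjection $N \twoheadrightarrow k$ sending $x$ to a generator, which splits $kx \hookrightarrow N$.

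With this criterion at hand, write $K_F = \ker(\del^F_i)$ and $K_C = \ker(\del^C_i)$. Because $G = F \oplus C$ as complexes, $\ker(\del^G_i) = K_F \oplus K_C$. Both $\soc(-)$ and $\m(-)$ respect direct sums, so the hypothesis $f + c \in \soc(K_F \oplus K_C) \setminus \m(K_F \oplus K_C)$ immediately forces $f \in \soc(K_F)$, $c \in \soc(K_C)$, and at least one of the non-memberships $f \notin \m K_F$ or $c \notin \m K_C$. The task reduces to ruling out the possibility that only the $K_C$ component is responsible.

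For this, note that $C$ being contractible means it is split exact, so $K_C = \im(\del^C_{i+1})$ is a direct summand of the free module $C_i$ and is therefore projective, hence free over our local (or graded-local) ring. The case $R = k$ is trivial (all $\syz_i(M)$ vanish for $i \geq 1$), so assume $R$ is not a field; then \cref{lem:summand-free} forbids $k$ from being a summand of $K_C$, and the criterion forces $c \in \m K_C$. Combined with $f + c \notin \m K_F \oplus \m K_C$, this yields $f \notin \m K_F$, and one more application of the criterion exhibits $f$ as generating a direct $k$-summand of $K_F = \syz_i(M)$. No step poses a genuine obstacle; the proof is essentially bookkeeping across the splitting, with the key input being the freeness of $K_C$ combined with \cref{lem:summand-free}.
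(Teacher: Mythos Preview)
Your proof is correct and takes the same approach as the paper: both use that $K_C=\ker(\del^C_i)$ is free (from split-exactness of the contractible $C$) together with \cref{lem:summand-free} to rule out $c$ as the source of the summand, forcing it onto $f$. Your socle criterion $x\in\soc(N)\setminus\m N$ is simply a repackaging of the paper's direct manipulation of the splitting map $\alpha$. One small caveat: the case $R=k$ is not actually ``trivial'' as you claim---for $i\ge 1$ one has $F_i=0$, so $f=0$, and any nonzero $c\in K_C$ satisfies the hypothesis while the conclusion fails---but the paper's proof carries the same implicit restriction to $R\neq k$, and it is irrelevant in the intended application.
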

\begin{proof}
Let $\alpha\colon G\rightarrow k$ be the splitting sending $f+c$ to $1$.
Then the map $k\rightarrow C_i$ sending $1$ to $c$ is well-defined, since $c$ is killed by $\m$.
The post-composition with $\alpha$ must be zero, else $c$ would generate a direct $k$-summand of the free module $\ker(\del^G_i)$, contradicting \cref{lem:summand-free}.
Then since $\alpha$ is nonzero, $\alpha(f)$ must be a unit $u$.
Hence the map $k\rightarrow F_i$ sending $1$ to $f$ is split by $u^{-1}\alpha.$
\end{proof}

Applying the above lemmas to the 
(possibly non-minimal) bar resolution $B=B(R,X,Y)$ from \cref{lem:general}, we immediately obtain the main result of this section:

\begin{theorem}\label{thm:general}
Let $Q\xra{\vp} R$ be a local or graded local homomorphism, and let $M$ be a finitely generated $R$-module. 
Suppose $\Burch(\vp)\geq 2$.

Then for $q\geq 4$ 
\[
\krank({\syz_{q+1}^RM})\geq 1.
\]
\end{theorem}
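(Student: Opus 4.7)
The plan is to derive \cref{thm:general} as an almost immediate consequence of the two lemmas \cref{lem:general} and \cref{lem:nonminimal-to-minimal}; all of the real content has been handled, and only a passage from the (possibly non-minimal) bar resolution to a minimal resolution of $M$ remains.

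First, I would fix the setup described in \cref{chu-general-setup}: a dg algebra resolution $X\xra{\simeq}_Q R$ with basis $e_1,\dots,e_m$ of $X_1$ and the distinguished elements $f_{x_j,x_i}\in X_2$ produced in \cref{chu-fij}, together with a semifree dg $X$-module resolution $Y\xra{\simeq}_Q M$ equipped with the degree-wise split dg module map $\psi\colon X\hookrightarrow Y$. The relative bar construction recalled in \cref{chu-bar} then yields a free (but in general non-minimal) $R$-resolution $B=B(R,X,Y)$ of $M$. Applying \cref{lem:general}, for each $q\geq 4$ (using the two formulae for $\rho_{i,j}$ according to the parity of $q$) the cycle $\alpha_{i,j}=s_i\rho_{i,j}\in B_q$ spans a direct $k$-summand of $\ker(\del^B_q)$.

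Next I would transfer this summand down to the minimal resolution. Since $R$ is local (or graded-local) and $M$ is finitely generated, a standard Nakayama argument applied degree-wise decomposes $B\cong F\oplus C$, with $F$ the minimal $R$-free resolution of $M$ and $C$ a contractible complex of free $R$-modules. Writing $\alpha_{i,j}=f+c$ relative to this decomposition and invoking \cref{lem:nonminimal-to-minimal}, the component $f$ generates a direct $k$-summand of $\ker(\del^F_q)$, which by definition is $\syz^R_{q+1}(M)$. Hence $\krank(\syz^R_{q+1}(M))\geq 1$ for every $q\geq 4$.

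I do not anticipate a serious obstacle in carrying this out. The entirety of the genuine work — identifying explicit Burch cycles in the bar complex and verifying that they split out — already lives in \cref{lem:general} and \cref{lem:splitting}, while the projection to the minimal resolution is packaged in \cref{lem:nonminimal-to-minimal}. The only point warranting a brief sanity check is the existence of the decomposition $B\cong F\oplus C$, which is standard for free resolutions over local or graded-local rings and is what allows \cref{lem:nonminimal-to-minimal} to be applied cleanly.
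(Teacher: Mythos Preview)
Your proposal is correct and matches the paper's own argument exactly: the paper states that \cref{thm:general} follows immediately by applying \cref{lem:general} to produce a $k$-summand of $\ker(\del^B_q)$ in the bar resolution and then invoking \cref{lem:nonminimal-to-minimal} (via the standard decomposition $B\cong F\oplus C$) to transfer it to $\ker(\del^F_q)=\syz^R_{q+1}(M)$. Your write-up simply spells out in full what the paper compresses into the sentence ``Applying the above lemmas\ldots we immediately obtain the main result.''
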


\cref{lem:general} identifies a much larger $k$-vector space summand of a resolution of $M$ over $R$, but it is not clear how many of these summands become identified upon projecting to the minimal resolution.
Computations suggest the lower bound in \cref{thm:general} is far from sharp, and it is possible that sharper bounds may be identified in some cases.
We provide such a sharper bound in the Golod case, as described in the next section.
\section{Golod case}
\label{sec-golod}

For Golod modules $M$, there is a bar resolution $B(R,X,Y)$ that minimally resolves $M$.
In this section, we use the minimality of the resulting resolution to establish \cref{introthm:golod}, which provides an exponential lower bound on the number of $k$-summands of syzygies of $M$, rather than the constant bound provided by \cref{introthm:general}.
We also establish this lower bound in one lower homological degree than that of \cref{introthm:general}.
In this section $\vp\colon Q \to R$ will be a local (or graded-local) homomorphism of Burch index at least two, and $M$ will be a $\vp$-Golod module, defined in the sequel.

\begin{chunk}\label{c:golodproperties}
The Golod property is characterized by minimality of a bar resolution $B(R,X,Y)$ of $M$.
Indeed, even when $M$ is not Golod, $B(R,X,Y)$ can still be constructed using $A_\infty$ structures on the minimal resolutions $X$ of $R$ and $Y$ of $M$ \cite{Bur-15,Iye-97}, as described in \cref{chu-bar}.
The resulting resolution $B(R,X,Y)$ of $M$ provides an upper bound for the growth of the Betti numbers of $M$. 
In particular, examining the relationship between the generating function of the ranks of $B(R,X,Y)$ with respect to those of $X$ and $Y$ reveals the coefficient-wise inequality of Poincar\'e series
\[P_M^R(t)\preccurlyeq \frac{P_M^Q(t)}{1-t(P_R^Q(t)-1)}\] 
and the classical definition of the Golod property is that equality holds in the above \cite{Avramov:2010}, demonstrating that the Betti numbers of Golod modules attain maximal growth.
In particular, one sees that for any $A_\infty$ algebra structure on the minimal $Q$-free resolution $X$ of $R$ and any $A_\infty$ $X$-module structure on the minimal  $Q$-free resolution $Y$ of $M$, the products $m_n\colon X^{\otimes n}\rightarrow X$ and $\mu_n\colon X^{\otimes n-1}\otimes Y\rightarrow Y$ are all minimal \cite[6.3]{Bur-15}, that is,

\[m_n(X_+\otimes\dots\otimes X_+)\subset \n X\,\,\,\text{and}\,\,\,\mu_n(X_+\otimes\dots\otimes X_+\otimes Y)\subset \n Y.\]

An important case is when $k$ is $\vp$-Golod with respect to the minimal Cohen presentation of $R$, in which case $R$ is said to be Golod.
Similarly, if $M$ is $\vp$-Golod with respect to the minimal Cohen presentation of $R$, then $M$ is said to be Golod.
The Golod property then has profound implications for the homological and representation theoretic properties of $R$; in particular, if $R$ is Golod and not a hypersurface, than the Betti sequence of any module of infinite projective dimension must grow exponentially.

Despite the apparent specificity of Golod properties, all rings and modules are ``asymptotically Golod'' in the following senses: if $I$ is an ideal in a regular local ring $Q$, then $Q/I^k$ is Golod for $k\gg 0$ \cite{HerWelYas-2016}.
More generally, when $(Q,\n)$ is only assumed to be local, the map $Q\rightarrow Q/\n^n$ is Golod for $n\gg0$ \cite{Lev-75}.
If $M$ is a module over a Golod ring, then its syzygies $\syz_i(M)$ are Golod for $i\geq \edim(R)-\depth(M)$\cite{Les-90}.
One way to obtain Golod rings of high Burch index is to take a quotient of a regular or graded ring $(Q,\n)$ by an ideal of the form $J\n$, where $J$ is a an ideal of $Q$.
For such rings, the Burch index is $\dim(Q)$ \cite[1.5]{DaoEis-23}.
\end{chunk}

The following establishes \cref{introthm:golod}.

\begin{theorem}\label{thm:golod}
Let $\vp\colon (Q,\n,k)\to (R,\m,k)$ be a surjective local (or graded-local) homomorphism with $\Burch(\vp)=b\geq 2$, and let $M$ be a finitely generated (resp., finitely generated graded) $R$-module such that $M$ is $\vp$-Golod.
Then for $q\geq 3$,
\[
\krank({\syz_{q+1}^RM})\geq {b\choose 2} m^{\lfloor{\frac{q-3}{2}}\rfloor}.
\]
where $m=\mu(\ker\vp)$ is the minimal number of generators of $\ker \vp$. 
\end{theorem}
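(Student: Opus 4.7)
My plan is to mirror the approach of \cref{lem:general,thm:general}, but to exploit the minimality of the $A_\infty$ bar resolution $B=B(R,X,Y)$ afforded by the Golod hypothesis on $M$, so that the $k$-summands can be counted directly in this minimal resolution rather than being lost under projection. First I would take $X\xra{\simeq}_QR$ and $Y\xra{\simeq}_QM$ to be the minimal $Q$-free resolutions, equipped with $A_\infty$ structures via \cref{prop:burkeAinfty}. By \cref{c:golodproperties} every operation $m_n$ and $\mu_n$ lands in $\n X$ or $\n Y$, and $B$ is the minimal $R$-free resolution of $M$.

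Next, using the distinguished basis elements $f_{x_j,x_i}\in X_2$ from \cref{chu-fij,lem-independent} and a basis $e_1,\ldots,e_m$ of $X_1$ with $\del e_\ell=a_\ell$, I set $r=\lfloor(q-3)/2\rfloor$ and, for each pair $1\le i<j\le b$ and each $\vec\ell\in\{1,\ldots,m\}^r$, define
\[
\rho_{ij,\vec\ell}\;=\;[f_{x_j,x_i}|e_{\ell_1}|\cdots|e_{\ell_r}]\psi(1)
\]
for odd $q=3+2r$, and the analogous expression ending in a fixed $\psi(e_1)\in Y_1$ for even $q=4+2r$. These yield $\binom{b}{2}m^r$ distinct basis elements of $B_q$. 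To verify the hypothesis of \cref{lem-splitting} after lifting each $\rho_{ij,\vec\ell}$ to $Q$, I would compute $\del\rho_{ij,\vec\ell}$: normalization of the bar complex kills $m_1$ contributions on the $e_{\ell_k}$ (since $\del e_{\ell_k}=a_{\ell_k}\in X_0$), and higher $m_k,\mu_k$ operations are minimal and strictly reduce bar length, landing in basis directions disjoint from $[e_{j_i}|e_{\ell_1}|\cdots]\psi(\cdot)$. The $\mu_1$ term on $\psi(e_1)$ in the even case contributes a coefficient $a_1\in I\subseteq\BI(\vp)$ (here $I\subseteq\BI$ follows from $(I:\n)\subseteq\n$ in depth zero), while the $m_1$ term on $f_{x_j,x_i}$ isolates the Burch coefficient $x_j-x_i r^{x_j,x_i}_{j_i}\notin\BI(\vp)$ on the basis element $[e_{j_i}|e_{\ell_1}|\cdots]\psi(\cdot)$, by \cref{lem-independent}. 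Hence $\del\rho_{ij,\vec\ell}\notin\BI(\vp)\,B_{q-1}$, and \cref{lem-splitting} with an associated socle element $s$ produces a $k$-summand $k\,\overline s\,\rho_{ij,\vec\ell}\hookrightarrow\syz_{q+1}(M)$ for each triple $(i,j,\vec\ell)$.

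The main obstacle is assembling these $\binom{b}{2}m^r$ individual $k$-summands into a single direct summand isomorphic to $k^{\binom{b}{2}m^r}$. The candidates $\overline s\,\rho_{ij,\vec\ell}$ sit in pairwise distinct rank-one free summands of $B_q$, so they are $k$-linearly independent inside $\soc(B_q)=\bigoplus_\alpha\soc(R)\beta_\alpha$; the question is whether this independence survives passage to $\syz_{q+1}(M)/\m\,\syz_{q+1}(M)\cong B_{q+1}/\m B_{q+1}$, where the isomorphism uses minimality of $B$. I would attack this by constructing explicit preimages under $\del^B_{q+1}$ of the form $[f_{x_j,x_i}|e_{\ell_1}|\cdots|e_{\ell_r}]\,y_i$, where $y_i\in Y_1$ satisfies $\del y_i=s_i\psi(1)$; such $y_i$ exists whenever $\overline s_i$ annihilates the chosen generator of $M$, which holds automatically for $M=k$ and can be arranged more broadly by a suitable choice of $\psi$. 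The care required in checking both the vanishing of the extraneous bar-product terms in $\del$ of these lifts and the linear independence of the $y_i$ in $Y_1/\n Y_1$ (where the hypothesis $b\ge2$ enters crucially, ensuring enough independent Burch directions to separate the lifts) is what makes this final step the most delicate part of the argument.
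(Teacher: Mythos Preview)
Your overall strategy coincides with the paper's: minimal $Q$-resolutions $X,Y$ with $A_\infty$ structures, the minimal bar resolution $B=B(R,X,Y)$ via $\vp$-Golodness, the basis elements $\rho=[f_{x_j,x_i}\,|\,e_{\ell_1}\,|\,\cdots\,|\,e_{\ell_d}]\,y$ in $B_q$ with $d=\lfloor(q-3)/2\rfloor$, the check $\del\rho\notin\BI(\vp)B_{q-1}$ via \cref{lem-independent}, and \cref{lem-splitting}. Two points need adjustment.

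First, drop $\psi$. The map $\psi\colon X\to Y$ came from the semifree dg construction of \cref{chu-general-setup} and is not part of the minimal $A_\infty$ setup used here. The paper simply fixes a basis element $y\in Y_r$, where $r\in\{0,1\}$ is the parity of $q-3$; replace your $\psi(1),\psi(e_1)$ by such a $y$ throughout. (This also repairs your side remark about $\mu_1$ on the last factor: one only gets $\del^Y(y)\in\n Y_{r-1}$ from minimality of $Y$, not an element of $I$; but that term lands in a bar-shape disjoint from $[\omega_{x_j,x_i}|\cdots]y$ and is irrelevant to the $\BI(\vp)$ check anyway.)

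Second, your ``assembly'' paragraph goes astray. The proposed preimages $[f_{x_j,x_i}|e_{\ell_1}|\cdots]y_i\in B_{q+1}$ with $\del^Y y_i=s_i\cdot(\text{generator of }Y_0)$ require $\bar s_i$ to annihilate a minimal generator of $M$; this fails, for instance, for $M=R/J$ whenever $\bar s_i\notin J$, so the construction does not go through in general, and no ``choice of $\psi$'' can repair it. The paper does not pass through $B_{q+1}$ at all. It observes that the $\rho_{ij,\vec\ell}$ are \emph{pairwise distinct basis elements} of the free module $B_q$, that the socle multiples $s_i\rho_{ij,\vec\ell}$ are cycles (immediate from minimality of $B$ and $\bar s_i\in\soc R$), and then applies \cref{lem-splitting} to each separately, concluding that this ``suffices to yield the result.'' Your explicit tracking of where the higher $A_\infty$ terms of $\del\rho$ land is correct and makes precise what the paper compresses into a one-line citation, but the preimage machinery you layer on top is both unnecessary and, as written, broken.
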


\begin{proof}
By \cref{prop:burkeAinfty}, the minimal $Q$-free resolution $X$ of $R$ admits the structure of an $A_\infty$ algebra and the minimal $Q$-free resolution $Y$ of $M$ admits the structure of an $A_\infty$ $X$-module.
When $M$ is $\vp$-Golod, the resulting bar resolution $B(R,X,Y)$ as described in \cref{chu-bar} is minimal as discussed in \cref{c:golodproperties}.

Set $I=\ker \vp$. As discussed in \cref{dfn-burch}, one may choose elements $x_1,\dots,x_n$ that minimally generate $\n$ ordered so that the images of $x_1,\dots,x_b$ are linearly independent in $\n/\BI(\vp)$.
As determined in \cref{chu-fij} and \cref{lem-independent}, these yield independent basis elements $f_{x_i,x_j}$ in $X_2$ for $1\leq i < j\leq b$ with the property that 
\[ \del(f_{x_i,x_j})\in \n X_1\setminus \n IX_1.\]
Further, noting that $m=\mu(I)=\rank_Q X_1$, take any basis $\{ e_1,\dots,e_m \}$ for for $X_1$. 
Write $q-3=2d+r$ where $d\geq 0$ and $r\in\{0,1\}$, and choose a basis element $y\in Y_r$.

Then we claim that the following elements of $B=B(R,X,Y)$ in degree $q$
\[
s_i [f_{x_j,x_i} | e_{i_1} | \cdots | e_{i_d}] y
\in B_q
\qquad 
1 \leq i < j \leq b 
{\textrm{ and }}
1 \leq i_k \leq m
\]
are independent cycles generating a $k$-vector space summand of $\del^B_q$. 
	Indeed they are cycles as the assumptions ensure that the differential of $B(R,X,Y)$ is minimal over $R$ and hence its image is killed by $s_i\in\soc R$. 
By \cref{lem-independent}, the set $\{ f_{x_j,x_i} | 1\leq i<j\leq b \}$ is independent in $X_2$ and hence the elements $s_if_{x_j,x_i}$ for $1\leq i<j\leq b$ generate a $k$-vector space in $R\otimes X_2$. 
Lastly, that these cycles are a summand of the $q$-th syzygy follows from \cref{lem-splitting}: 
The elements 
\[
\rho_{i,j}=[f_{x_j,x_i} | e_{i_1} | \cdots | e_{i_r}] y
\]
are part of a basis of $B_q$.
Lifting the differential $\del\colon B_q\rightarrow B_{q-1}$ to $Q$ and $\rho_{i,j}$ to a basis element $\rho=\widetilde{\rho_{i,j}}$, the assumptions on $f_{x_j,x_i}$ ensure that $\del(\rho)\notin \BI(\vp)B_{q-1}$. 
Hence each of these cycles generate a $k$-summand of the $q$-th syzygy by \cref{lem-splitting}, which suffices to yield the result. 
\end{proof}

\begin{corollary}\label{cor:Rgolod}
Let $R$ be a Golod ring and $M$ be an $R$-module. Then the exponential growth of $\krank({\syz_{i}^RM})$ begins by $i=\min\{9,\edim(R)-\depth(M)+4\}$.
\end{corollary}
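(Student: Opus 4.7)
The plan is to establish the two bounds inside the minimum separately, each by applying \cref{thm:golod} to an appropriately chosen $\vp$-Golod $R$-module, where $\vp\colon Q\to\hat R$ is the minimal Cohen presentation of $R$. In both applications the exponential base is controlled by $b=\Burch(R)$ and $m=\mu(\ker\vp)$, and I will assume implicitly that both are at least $2$, without which no exponential growth is possible. The translation between $R$-modules and $\hat R$-modules is handled by \cref{prop:completion} and \cref{dfn-burch-absolute}, so I will write everything in terms of $R$-modules and $\krank_R$.

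For the bound $\edim(R)-\depth(M)+4$, I would invoke the theorem of Lescot recalled in \cref{c:golodproperties}: setting $j_0:=\edim(R)-\depth(M)$, the syzygy $N:=\syz_{j_0}^R(M)$ is $\vp$-Golod. Applying \cref{thm:golod} to $N$ yields
\[
\krank(\syz_{q+1}^R(N))\geq \binom{b}{2}m^{\lfloor (q-3)/2\rfloor}\qquad\text{for all }q\geq 3,
\]
and since minimal resolutions compose, $\syz_{q+1}^R(N)\cong\syz_{j_0+q+1}^R(M)$; hence exponential growth begins at $i=j_0+4$.

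For the bound $9$, I would first apply \cref{thm:general} to extract a $k$-summand from $\syz_5^R(M)$, giving a decomposition $\syz_5^R(M)\cong k\oplus N'$ for some $R$-module $N'$. Since $R$ is Golod, the residue field $k$ is $\vp$-Golod, so \cref{thm:golod} applied to $k$ produces $\krank(\syz_{q+1}^R(k))\geq \binom{b}{2}m^{\lfloor(q-3)/2\rfloor}$ for $q\geq 3$. A minimal resolution of $k\oplus N'$ is the direct sum of minimal resolutions of $k$ and $N'$, so $\syz_{5+j}^R(M)\cong\syz_j^R(k)\oplus\syz_j^R(N')$ for every $j\geq 0$, and therefore $\krank(\syz_{5+j}^R(M))\geq \krank(\syz_j^R(k))$. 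This grows exponentially for $j\geq 4$, that is, for $i=5+j\geq 9$.

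The main obstacle is really just the minor bookkeeping of verifying that a $k$-summand of the minimal fifth syzygy propagates cleanly to a summand of every higher minimal syzygy; this reduces to the standard fact that a minimal free resolution of a direct sum decomposes as a direct sum of minimal free resolutions of its components. Once this is in hand, both bounds follow as direct applications of \cref{thm:general,thm:golod}, and taking the smaller of the two yields the stated threshold.
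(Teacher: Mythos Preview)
Your argument is correct and mirrors the paper's proof essentially step for step: both branches invoke Lescot's result from \cref{c:golodproperties} to obtain a Golod syzygy at $\edim(R)-\depth(M)$, and \cref{thm:general} to extract a Golod summand $k$ at degree $5$, then apply \cref{thm:golod} to the resulting Golod module to gain the exponential bound four steps later. Your version is slightly more explicit about the ambient hypothesis $\Burch(R)\geq 2$ (which the corollary statement leaves implicit), the passage through completion, and the direct-sum decomposition of minimal resolutions, but these are elaborations of the same approach rather than a different one.
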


\begin{proof}
By the result mentioned in \cref{c:golodproperties}, the $R$-module $\syz_i^R(M)$ is Golod if $i\geq\edim(R)-\depth(M)$. 
On the other hand, by \cref{thm:general}, $\syz_i^R(M)$ has a Golod direct summand (namely, $k$) for $i\geq 5$. 
In either case, \cref{thm:golod} insures that the Golod module will contribute exponential growth of the $\krank$ by at most 4 steps later. 
\end{proof}

\begin{remark}
\label{rmk:exponential}
Lastly we note that, even when $R$ is not Golod, it already follows from \cite{DaoEis-23} that a \emph{subsequence} of $\krank(\syz_R^i(M))$ grows exponentially whenever $\Burch(R)\geq 2$. Indeed, note that $\krank(\syz_R^5(M))\geq 1$, and so the resolution of $M$ contains a direct summand of the resolution of $k$, Therefore, it suffices to show this fact for $M=k$. We have that $\syz_R^5(k)\isom k\oplus M'$, and we claim that $M'\neq 0$: Since $\edim(R)\geq \Burch(R)\geq 2$, $R$ is not the hypersurface $k[x]/(x^2)$, so by examining the acyclic closure \cite{Tate:1957}, we get $M'\neq 0$.
Then we get in turn that $\krank(\syz_R^5(M')\geq 1$, and so 
\[\krank(\syz_R^{10}(k))\geq \krank(\syz_R^5(k))+\krank(\syz_R^5(M')\geq 2,\]
Continuing inductively, we see $\krank(\syz_R^{10n})\geq 2^n$.
\end{remark}

\printbibliography

\end{document}

